\def\RSthmtxt{theorem~}\newref{thm}{name = \RSthmtxt}}
\def\RSlemtxt{lemma~}\newref{lem}{name = \RSlemtxt}}
\theoremstyle{plain}
\newtheorem{thm}{\protect\theoremname}[section]
  \theoremstyle{plain}
  \newtheorem{lem}[thm]{\protect\lemmaname}
  \theoremstyle{definition}
  \newtheorem{defn}[thm]{\protect\definitionname}
  \theoremstyle{remark}
  \newtheorem{rem}[thm]{\protect\remarkname}
  \theoremstyle{plain}
  \newtheorem{prop}[thm]{\protect\propositionname}
  \theoremstyle{plain}
  \newtheorem{cor}[thm]{\protect\corollaryname}
\setlist[enumerate,1]{label=(\roman*)}
\DeclareMathOperator{\conv}{{conv}}
\DeclareMathOperator{\E}{{\mathbb E}}
\DeclareMathOperator{\var}{\mathsf {var}}
\DeclareMathOperator*{\argmin}{arg\,min}
\DeclareMathOperator*{\argmax}{arg\,max}
\newcommand{\HB}{\boldsymbol{\mathrm{HB}}}
  \providecommand{\corollaryname}{Corollary}
  \providecommand{\definitionname}{Definition}
  \providecommand{\lemmaname}{Lemma}
  \providecommand{\propositionname}{Proposition}
  \providecommand{\remarkname}{Remark}
\providecommand{\theoremname}{Theorem}
\begin{document}

\title{Geometry Of The Expected Value Set And\\
The Set-Valued Sample Mean Process }

\author{Alois Pichler\thanks{Faculty of Mathematics, Chemnitz University of Technology, Germany.
\protect \\
Contact: \protect\href{mailto:alois.pichler@mathematik.tu-chemnitz.de}{alois.pichler@mathematik.tu-chemnitz.de},
\protect\href{https://www.tu-chemnitz.de/mathematik/fima/}{https://www.tu-chemnitz.de/mathematik/fima/}}}
\maketitle
\begin{abstract}
The law of large numbers extends to random sets by employing Minkowski
addition. Above that, a central limit theorem is available for set-valued
random variables. The existing results use abstract isometries to
describe convergence of the sample mean process towards the limit,
the expected value set. These statements do not reveal the local geometry
and the relations of the sample mean and the expected value set, so
these descriptions are not entirely satisfactory in understanding
the limiting behavior of the sample mean process. This paper addresses
and describes the fluctuations of the sample average mean on the boundary
of the expectation set. 

\textbf{Keywords:} Random sets, set-valued integration, stochastic
optimization, set-valued risk measures 

\textbf{Classification:} 90C15, 26E25, 49J53, 28B20 
\end{abstract}

\section{Introduction}

\citet{Artstein1975} obtain an initial law of large numbers for random
sets. Given this result and the similarities of Minkowski addition
of sets with addition and multiplication for scalars it is natural
to ask for a central limit theorem for random sets. After some pioneering
work by \citet{Cressie1979}, \citet{Weil1982} succeeds in establishing
a reasonable result describing the distribution of the Pompeiu\textendash Hausdorff
distance between the sample average and the expected value set. The
result is based on an isometry between compact sets and their support
functions, which are continuous on some appropriate and adapted sphere
(cf.\ also \citet{Norkin2013} and \citet{Proske2003}; cf.\ \citet{Kuelbs1976}
for general difficulties). However, the Pompeiu\textendash Hausdorff
distance of random sets is just an $\mathbb{R}$\nobreakdash-valued
random variable and its distribution is on the real line. But how
do these sample averages, as sets in $\mathbb{R}^{d}$, converge locally?
We address this question for selected points at the boundary of the
expected value set. 

This paper elaborates local features of set-valued convergence of
sample means and the distribution of particular selections is in focus
of our interest. To develop the intuitive understanding we specify
and restrict ourselves occasionally to a discrete setting chosen in
\citet{Cressie1979}; this situation is natural for set-valued risk
functionals in mathematical finance as well.

\paragraph{Outline of the paper. }

We introduce the expectation and the Pompeiu\textendash Hausdorff
distance in Section~\ref{sec:Preliminaries}. Of particular interest
are the boundary points of the expected value. We classify the boundary
points in Section~\ref{sec:4} and discuss relations between boundary
points of the expected value set and corresponding points of the sample
means. Section~\ref{sec:LLN} addresses the Law of Large Numbers
and Section~\ref{sec:CLT} the Central Limit Theorem. These sections
contain our main results, which describe convergence of sample means
relative to particular points on the boundary. Section~\ref{sec:Summary}
concludes and summarizes the results. 

\section{\label{sec:Preliminaries}Mathematical setting}

We work in $\mathbb{R}^{d}$ with norm $\left\Vert \cdot\right\Vert $.
We denote this space by $X:=\left(\mathbb{R}^{d},\left\Vert \cdot\right\Vert \right)$,
its dual by $X^{*}:=\left(\mathbb{R}^{d},\left\Vert \cdot\right\Vert _{*}\right)$
and the unit sphere in the dual by $S^{d-1}:=\left\{ x:\,\left\Vert x\right\Vert _{*}=1\right\} $.
The Minkowski sum (also known as dilation) of two subsets $A$ and
$B$ of $\mathbb{R}^{d}$ is $A+B:=\left\{ a+b\colon a\in A,\,b\in B\right\} $
and the product with a scalar $p$ is $p\cdot A:=\left\{ p\cdot a\colon a\in A\right\} $.
 We denote the convex hull of a set $A$ by $\conv A$ and its topological
closure by $\overline{\conv}A$.

\paragraph{Pompeiu\textendash Hausdorff Distance.}

The appropriate distance on $\mathfrak{C}_{d}$, the set of compact
subsets of $\mathbb{R}^{d}$, is the Pompeiu\textendash Haus\-dorff
distance. For this define the point-to-set distance as $d(a,B):=\inf_{b\in B}\left\Vert b-a\right\Vert $.
The \emph{deviation }of the set $A$ from the set $B$ is $\mathbb{D}(A,B):=\sup_{a\in A}d\left(a,B\right)$.\footnote{An equivalent definition is $\mathbb{D}(A,B):=\inf\left\{ \varepsilon>0:A\subset B+Ball_{\varepsilon}(0)\right\} $;
here, $B_{\varepsilon}:=B+Ball_{\varepsilon}(0)$ is often called
$\varepsilon$\nobreakdash-fattening, or $\varepsilon$\nobreakdash-enlargement
of $B$.} (Some references call $\mathbb{D}(A,B)$ the excess of $A$ over
$B$, cf.\ \citet{Hess2002}.) The \emph{Pompeiu\textendash Hausdorff
distance }is $\mathbb{H}\left(A,B\right):=\max\left\{ \mathbb{D}\left(A,B\right),\,\mathbb{D}\left(B,A\right)\right\} $,
cf.\ also \citet{WetsRockafellar97}. 

Note that $\mathbb{D}(A,B)=0$ iff $A$ is contained in the topological
closure of $B$, $A\subseteq\overline{B}$, and $\mathbb{H}\left(A,B\right)=0$
iff $\overline{A}=\overline{B}$; moreover $\mathbb{H}\left(A,B\right)=\mathbb{H}\left(\overline{A},B\right)$.

If $\overline{A}$ and $\overline{B}$ are  compact and convex then
it is enough to consider their boundaries $\partial A$ and $\partial B$,
as we have in addition that $\mathbb{H}\left(A,B\right)=\mathbb{H}\left(\partial A,\partial B\right)$
(cf.~\citet{Wills2007}). In this case we have 
\begin{equation}
\mathbb{H}(A,B)=\left\Vert b-a\right\Vert \label{eq:HausdorffBoundary}
\end{equation}
for some $a\in\partial A$ and $b\in\partial B$. 
\begin{lem}[\citet{Castaing1977}]
The deviation $\mathbb{D}$ and the Pompeiu\textendash Hausdorff
distance $\mathbb{H}$ satisfy the triangle inequality, $\mathbb{D}\left(A,C\right)\le\mathbb{D}\left(A,B\right)+\mathbb{D}\left(B,C\right)$
and $\mathbb{H}\left(A,C\right)\le\mathbb{H}\left(A,B\right)+\mathbb{H}\left(B,C\right)$.
For a Polish space $\left(X,d\right)$ the space $\left(\mathfrak{C},\,\mathbb{H}\right)$,
where $\mathfrak{C}$ is the set of all nonempty, compact and convex
subsets of $X$, is a Polish space again (i.e., a complete, separable
and metric space).
\end{lem}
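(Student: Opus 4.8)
The plan is to treat the two assertions separately: the triangle inequalities, which hold in any metric space, and then the fact that $\left(\mathfrak{C},\mathbb{H}\right)$ is Polish, which is the classical statement for a hyperspace of compacta, adapted so as to keep track of convexity.

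\emph{Triangle inequality.} I would argue at the level of the point-to-set distance in an arbitrary metric space $(X,d)$. Fix $a\in A$, $b\in B$, $c\in C$. From $d(a,C)\le d(a,c)\le d(a,b)+d(b,c)$ I take the infimum over $c\in C$ to obtain $d(a,C)\le d(a,b)+d(b,C)\le d(a,b)+\mathbb{D}(B,C)$, then the infimum over $b\in B$ to obtain $d(a,C)\le d(a,B)+\mathbb{D}(B,C)$, and finally the supremum over $a\in A$ to conclude $\mathbb{D}(A,C)\le\mathbb{D}(A,B)+\mathbb{D}(B,C)$. The inequality for $\mathbb{H}$ follows by applying this also to the reversed triple, $\mathbb{D}(C,A)\le\mathbb{D}(C,B)+\mathbb{D}(B,A)$, together with the elementary bound $\max\{s+t,\,s'+t'\}\le\max\{s,s'\}+\max\{t,t'\}$.

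\emph{Metric and separability.} On $\mathfrak{C}$ the functional $\mathbb{H}$ is finite because compact sets are bounded, symmetric by definition, and satisfies $\mathbb{H}(A,B)=0\Rightarrow\overline A=\overline B\Rightarrow A=B$ since compact sets are closed; with the triangle inequality this makes $\left(\mathfrak{C},\mathbb{H}\right)$ a metric space. For separability I fix a countable dense set $D\subseteq X$ and consider the countable family $\mathfrak{D}:=\{\conv F:\emptyset\ne F\subseteq D\text{ finite}\}$, each member being compact and convex as the continuous affine image of a standard simplex. Given $K\in\mathfrak{C}$ and $\varepsilon>0$, total boundedness of $K$ yields $p_1,\dots,p_m\in K$ whose $\varepsilon$-balls cover $K$; picking $q_i\in D$ with $d(p_i,q_i)<\varepsilon$ and setting $F=\{q_1,\dots,q_m\}$, one checks $\mathbb{D}(\conv F,K)\le\varepsilon$ (because $K+Ball_\varepsilon(0)$ is convex and contains $F$) and $\mathbb{D}(K,\conv F)\le 2\varepsilon$, hence $\mathbb{H}(K,\conv F)\le 2\varepsilon$, so $\mathfrak{D}$ is dense.

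\emph{Completeness.} This is the substantive part. Since a Cauchy sequence converges as soon as it has a convergent subsequence, I pass to a subsequence $(B_k)_k$ with $\mathbb{H}(B_k,B_{k+1})<2^{-k}$. Telescoping the fattening inclusions gives
\[
\bigcup_{j\ge k}B_j\ \subseteq\ B_k+Ball_{2^{1-k}}(0)\qquad(k\ge1).
\]
I would then show directly that $C_1:=\overline{\bigcup_{j\ge1}B_j}$ is compact by verifying total boundedness: given $\varepsilon>0$, choose $N$ with $2^{1-N}<\varepsilon/2$, so $\bigcup_{j\ge N}B_j$ lies within $\varepsilon/2$ of the totally bounded set $B_N$, while $B_1,\dots,B_{N-1}$ are finitely many compact sets. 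Consequently the sets $C_k:=\overline{\bigcup_{j\ge k}B_j}$ are nonempty and compact, decrease with $k$, and therefore $B:=\bigcap_{k}C_k$ is nonempty and compact. The bound $\mathbb{D}(B,B_k)\le 2^{1-k}$ is immediate from $B\subseteq C_k$ and the displayed inclusion. For the reverse deviation, given $x\in B_k$ I build a chain $x=x_k,x_{k+1},x_{k+2},\dots$ with $x_j\in B_j$ and $d(x_j,x_{j+1})<2^{-j}$, which is possible since $\mathbb{D}(B_j,B_{j+1})<2^{-j}$; this chain is Cauchy, hence converges in the complete space $X$ to some $y$ with $d(x,y)\le 2^{1-k}$, and $y\in C_m$ for every $m$ because $x_j\in\bigcup_{i\ge m}B_i$ once $j\ge m$; thus $y\in B$ and $\mathbb{D}(B_k,B)\le 2^{1-k}$. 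Hence $\mathbb{H}(B_k,B)\le 2^{1-k}\to0$, so the original Cauchy sequence converges to $B$. Finally $B$ is convex: for $x,y\in B$ and $\lambda\in[0,1]$ pick $x_k,y_k\in B_k$ with $x_k\to x$ and $y_k\to y$ (possible because $\mathbb{D}(B,B_k)\to0$); then $\lambda x_k+(1-\lambda)y_k\in B_k$ converges to $\lambda x+(1-\lambda)y$, whose distance to the closed set $B$ is at most $\mathbb{D}(B_k,B)\to0$, so $\lambda x+(1-\lambda)y\in B$.

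The completeness argument is the only place I expect genuine difficulty. In a general Polish space ``closed and bounded'' does not imply compact, so one cannot simply call the set of subsequential limits of selections the limit set; the key device is to trap the tail $\bigcup_{j\ge N}B_j$ inside a neighbourhood of a single compact $B_N$ in order to recover total boundedness of $C_1$, after which completeness of $X$ is used exactly once, when passing from the Cauchy chain $(x_j)$ to its limit. With the limit set $B$ in hand, the estimate $\mathbb{H}(B_k,B)\to0$ and the preservation of convexity are routine.
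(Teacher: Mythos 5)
Your proof is correct. Note first that the paper itself gives no proof of this lemma: it is stated with an attribution to Castaing--Valadier and left as a citation, so there is no in-paper argument to compare yours against. Your write-up is the standard self-contained proof and all the steps check out: the triangle inequality via the pointwise chain $d(a,C)\le d(a,b)+d(b,C)$ followed by the two extremizations and the elementary $\max$-inequality; separability via convex hulls of finite subsets of a countable dense set (where your bound $\mathbb{D}(\conv F,K)\le\varepsilon$ correctly exploits that $K+Ball_{\varepsilon}(0)$ is convex because $K\in\mathfrak{C}$ is); and completeness via the candidate limit $B=\bigcap_{k}\overline{\bigcup_{j\ge k}B_{j}}$, where you rightly identify the one nontrivial point, namely that compactness of the tail closure must be recovered from total boundedness (trapping $\bigcup_{j\ge N}B_{j}$ in a $2^{1-N}$-fattening of the single compact $B_{N}$) rather than from ``closed and bounded''. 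The chain construction giving $\mathbb{D}(B_{k},B)\le 2^{1-k}$ and the limit argument for convexity of $B$ are also correct. The only caveat worth flagging is one inherited from the statement itself: ``convex subsets of a Polish space $(X,d)$'' presupposes a linear structure on $X$ (and your use of $Ball_{\varepsilon}(0)$ and of affine images of simplices presupposes it too); since the paper only ever uses the result in $X=\mathbb{R}^{d}$, this is harmless, but a careful version of your proof should state that $X$ is a separable Banach (or at least Polish linear metric) space.
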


By the preceding lemma $(\mathfrak{C}_{d},\mathbb{H})$, the nonempty
compact subsets of $\mathbb{R}^{d}$ endowed with the Pompeiu\textendash Hausdorff
distance $\mathbb{H}$, is a measurable space. In what follows we
equip $\mathfrak{C}_{d}$ with the sigma algebra of its Borel subsets
generated by the family of closed subsets of $\mathfrak{C}_{d}$.

\subsection{\label{sec:Expectation}Expectation}

We consider a set-valued random variable $Y\colon\Omega\rightrightarrows\mathbb{R}^{d}$
(commonly random sets) on some complete probability space $(\Omega,\mathcal{F},P)$.
Throughout the paper we assume that the set-valued random variable
$Y:\Omega\rightrightarrows\mathbb{R}^{d}$ is compact-valued and measurable,
i.e., the associated map $Y\colon\Omega\to\left(\mathfrak{C}_{d},\mathbb{H}\right)$
is measurable. 
\begin{defn}[{Expectation, cf.\ \citet[Definition~1.12]{Molchanov}}]
\label{def:Expectation}The expectation $\E Y$ of a set-valued random
variable $Y\colon\Omega\rightrightarrows\mathbb{R}^{d}$ is the collection
\begin{equation}
\E Y:=\left\{ \int_{\Omega}\mathbf{y}\mathrm{d}P\colon\mathbf{y}(\cdot)\text{ an integrable selection of }Y\right\} \subseteq\mathbb{R}^{d};\label{eq:Expectation}
\end{equation}
a function $\mathbf{y}\colon\Omega\rightarrow\mathbb{R}^{d}$ is an
integrable selection of $Y$ if $\mathbf{y}(\omega)\in Y(\omega)$
for $P$\nobreakdash-almost every $\omega\in\Omega$ and $\mathbf{y}(\cdot)$
is $P$\nobreakdash-integrable, i.e., $\int\left\Vert \mathbf{y}(\omega)\right\Vert P(\mathrm{d}\omega)<\infty$.
The expectation~(\ref{eq:Expectation}) (also Aumann expectation)
is often denoted $\E Y=\int Y\mathrm{d}P$ as well. 
\end{defn}

\subsubsection*{Atomic versus non-atomic probability spaces}

Consider a set-valued random variable $Y$ defined on an \emph{atomic}
space $(\Omega,\mathcal{F},P)$ such that 
\begin{equation}
P(Y=K_{1})=p_{1},\,P(Y=K_{2})=p_{2},\dots\text{ and }P(Y=K_{J})=p_{J}\label{eq:discrete}
\end{equation}
for finitely many sets $\left(K_{j}\right)_{j=1}^{J}$ with weights
$p_{j}>0$, $\sum_{j=1}^{J}p_{j}=1$. From the definition of the expected
value~(\ref{eq:Expectation}) it is evident that 
\[
\E Y=\int Y\mathrm{d}P=\sum_{j=1}^{J}p_{j}K_{j}
\]
(cf.\ \citet{Cressie1979} and Figure~\ref{fig:ALineB} for illustration).
$\E Y$ is moreover compact, provided that  all $K_{j}$ are compact.
\begin{figure}[t]
\centering{}\begin{tikzpicture}[scale= 0.6]

\node[blue!50!red] at (5.5,1.9) {$\frac 1 2(A+B)$};

\filldraw[blue!50!red] (2.5,0.5)--(3.5,1.5)--(6.5,1.5)--(5.5,0.5)--cycle;
\draw[red, very thick] (3,0)--(9,0);	\node[red] at (8,0.4) {$A$};
\draw[black!10!blue, very thick] (2,1)--(4,3);	\node[black!10!blue] at (3,2.6) {$B$};
\end{tikzpicture}\hfill{}\begin{tikzpicture}[scale= 0.6]

\draw[black!10!red, very thick](-3,1) circle (.5); \node[black!10!red]  at (-3.5,0.3) {$A$};
\draw[blue, very thick](3,0) circle (1.5); \node[blue] at (4.4,-1.2){$B$};

\filldraw[even odd rule, blue!50!red](0,0.5) circle (1.0) (0,0.5) circle (.5);
\node[blue!50!red]  at (-0.1,-1.3) {$\frac 1 2(A+B)$};
\end{tikzpicture}\hfill{}\begin{tikzpicture}[xscale= 0.6, yscale=0.43]

\node[blue!50!red] at (4,2.0) {$\frac 1 2(A+B)$};
\draw[red, very thick] (2,-2)--(0,0)--(2,2); \node[red] at (0,-1) {$A$};
\draw[blue, very thick] (6,-1)--(6,1); \node[blue] at (5.5,-1) {$B$};
\filldraw[blue!50!red] (4,-1.5)--(3,-0.5)--(3,0.5)--(4,1.5)--(4,0.5)--(3,-0.5)--(3,0.5) --(4,-.5)--(4,-1.5);

\end{tikzpicture}\caption{Two sets $A$ and $B$ (lines), their mean $\frac{1}{2}(A+B)$ is
an area in $\mathbb{R}^{2}$\label{fig:ALineB}}
\end{figure}
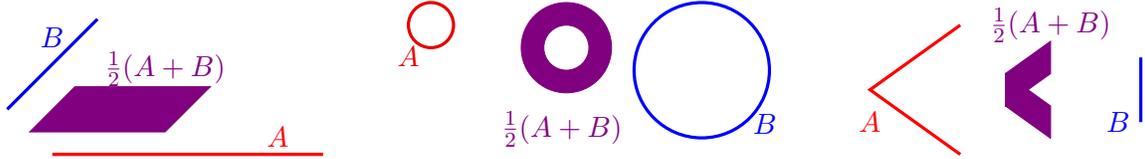

Note that $\E Y$ is not necessarily convex. The expectation $\E Y$
is convex, provided that all sets $K_{j}$ are convex, as $\E\conv Y=\sum_{j=1}^{J}p_{j}\conv K_{j}=\conv\sum_{j=1}^{J}p_{j}K_{j}=\conv\E Y$,
where $\conv A$ denotes the convex hull of the set $A$.

The situation notably changes for non-atomic probability spaces. Aumann's
Theorem (cf.\ \citet[Theorem~2]{Aumann1965}) ensures that $\E Y$
is non-empty, compact \emph{and convex}, provided that $P$ does not
have atoms and there is an integrable random variable $h(\cdot)$,
called an \emph{envelope function}, such that 
\begin{equation}
\left\Vert Y(\omega)\right\Vert :=\sup_{y\in Y(\omega)}\left\Vert y\right\Vert \le h(\omega).\label{eq:Envelope}
\end{equation}

Unless stated differently we shall assume the standard, non-atomic
probability space in what follows. Further, the random set $Y$ is
assumed to be compact, convex valued and integrably bounded, i.e.,
$\left\Vert Y(\cdot)\right\Vert $ is measurable and $\int\left\Vert Y(\cdot)\right\Vert \mathrm{d}P<\infty$
(cf.\ \citet[Definition~1.11]{Molchanov}): Section~\ref{subsec:Convexification}
below outlines why this setting is not an essential restriction in
investigating the law of large numbers and the central limit theorem.
As well, the chosen setting insures that the expectation $\E Y$ defined
in~(\ref{eq:Expectation}) is closed (cf.\ \citet[Theorem~1.24]{Molchanov}).

\subsection{Support function }

The support function of a set $A\subseteq X$ is
\begin{equation}
s_{A}\big(x^{*}\big):=\sup_{a\in A}x^{*}(a),\label{eq:supportFctn}
\end{equation}
where $x^{*}\in X^{*}$ is from the dual $X^{*}=\left(\mathbb{R}^{d},\left\Vert \cdot\right\Vert \right)^{*}=\left(\mathbb{R}^{d},\left\Vert \cdot\right\Vert _{*}\right)$. 

By the Fenchel\textendash Moreau-duality theorem (cf.\ \citet{Rockafellar1974})
we  have the relation 
\[
\overline{\conv}A=\left\{ s_{A}^{*}<\infty\right\} ,
\]
where $s_{A}^{*}(a):=\sup_{x^{*}\in X^{*}}x^{*}(a)-s_{A}(x^{*})$
is the convex conjugate of $s_{A}$. The correspondence $A\mapsto s_{A}$
is one-to-one (injective) between convex, compact sets $A\in\mathfrak{C}_{d}$
and finite valued convex positively homogeneous functions on $\mathbb{R}^{d}$
and satisfies the isometry 
\begin{equation}
\sup_{a\in A}\left\Vert a\right\Vert =\sup_{a\in A}\sup_{\left\Vert x^{*}\right\Vert _{*}=1}x^{*}(a)=\sup_{\left\Vert x^{*}\right\Vert _{*}=1}s_{A}(x^{*})=\left\Vert s_{A}\right\Vert _{\infty},\label{eq:Norm}
\end{equation}
where the norm on the space $C(S^{d-1})$ of bounded and continuous
functions defined on the unit sphere in the dual space 
\begin{equation}
S^{d-1}:=\left\{ x^{*}\in\mathbb{R}^{d}:\left\Vert x^{*}\right\Vert _{*}=1\right\} =\partial B_{X^{*}}\label{eq:Sphere}
\end{equation}
is $\left\Vert f\right\Vert _{\infty}:=\sup_{x\in S^{d-1}}\left|f(x)\right|$.

 As the support function is positively homogeneous ($s_{A}(\lambda x^{*})=\lambda s_{A}(x^{*})$
for $\lambda>0$), one may restrict $s_{A}$ to the unit sphere of
the dual without losing information (cf.~(\ref{eq:Sphere})). The
mapping $K\mapsto\left.s_{K}\right|_{\partial B_{X^{*}}}$ (the restriction
to the sphere $S^{d-1}$) is an \emph{isometric isomorphism }from
$\mathfrak{C}_{d}$, the convex, compact subsets of \emph{$\mathbb{R}^{d}$
onto }$C\left(S^{d-1}\right)$, the Banach space of continuous functions
endowed with the norm $\left\Vert f\right\Vert _{\infty}=\sup_{s\in\partial B_{X^{*}}}\left|f(s)\right|$
on the compact set $S^{d-1}=\partial B_{X^{*}}$ by~(\ref{eq:Norm}). 

\subsection{\label{sec:LocalDescription}Tangent planes}

The subdifferential of an $\mathbb{R}$-valued function $f:X^{*}\rightarrow\mathbb{R}$
at a point $x^{*}\in X^{*}$ is the set 
\[
\partial f\left(x^{*}\right):=\left\{ u\in X\colon f\left(z^{*}\right)-f\left(x^{*}\right)\ge z^{*}(u)-x^{*}(u)\text{ for all }z^{*}\in X^{*}\right\} \subseteq X.
\]
The subdifferential $\partial f\left(x^{*}\right)$ is  a convex subset
of $X$, so $\partial f$ is a set-valued mapping, 
\begin{align*}
\partial f\colon X^{*} & \rightrightarrows X\\
x^{*} & \mapsto\partial f\left(x^{*}\right).
\end{align*}

With the subdifferential at hand we have the following characterization
of the subdifferential of the support function $s_{A}$ of a set $A$
(the bipolar theorem for indicator functions), which will turn out
useful in investigating the expected value set. 
\begin{lem}
\label{thm:SupArgMax}The support function $s_{A}$ has the subdifferential
\begin{equation}
\partial s_{A}\left(x^{*}\right)=\argmax_{\overline{\conv}A}x^{*},\label{eq:Argmax}
\end{equation}
where $x^{*}\in X^{*}$ and 
\[
\argmax_{D}f:=\argmax\left\{ f(d):\,d\in D\right\} =\left\{ x\in D:\,f(x)\ge f(x^{\prime})\text{ for all }x^{\prime}\in D\right\} .
\]
Moreover, $\partial s_{A}\left(x^{*}\right)\subseteq\partial\conv A$
for every $x^{*}\in X^{*}$. 
\end{lem}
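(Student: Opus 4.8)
The plan is to reduce to a closed, convex, compact set, to prove formula~\eqref{eq:Argmax} by a pair of inclusions, and then to extract the boundary statement from a supporting-hyperplane argument. As a preliminary reduction I would note that the support function only sees the closed convex hull, $s_A=s_{\overline{\conv}A}$, and that in the compact setting of the paper $\conv A$ is itself compact (Carath\'eodory's theorem) and hence closed, so $C:=\overline{\conv}A=\conv A$ is nonempty, compact and convex; it thus suffices to prove $\partial s_A(x^*)=\argmax_C x^*$. The inclusion ``$\supseteq$'' is then read off directly from the definition of the subdifferential: if $u\in C$ with $x^*(u)=s_A(x^*)$, then $z^*(u)\le s_C(z^*)=s_A(z^*)$ for every $z^*\in X^*$ because $u\in C$, and subtracting $x^*(u)=s_A(x^*)$ gives $z^*(u)-x^*(u)\le s_A(z^*)-s_A(x^*)$, i.e.\ $u\in\partial s_A(x^*)$.

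For the reverse inclusion I would take $u\in\partial s_A(x^*)$ and first establish $x^*(u)=s_A(x^*)$: testing the subgradient inequality $s_A(z^*)-s_A(x^*)\ge z^*(u)-x^*(u)$ against $z^*=\lambda x^*$ for $\lambda>0$ and against $z^*=0$, and using positive homogeneity of $s_A$ together with $s_A(0)=0$, squeezes $x^*(u)$ against $s_A(x^*)$ from both sides. Once this equality holds, the subgradient inequality collapses to $z^*(u)\le s_A(z^*)=\sup_{a\in C}z^*(a)$ for all $z^*\in X^*$, so $u$ lies in every closed half-space containing $C$; since $C$ is closed and convex, the Hahn--Banach separation theorem forces $u\in C$, and together with $x^*(u)=\sup_{a\in C}x^*(a)$ this is precisely $u\in\argmax_C x^*$. (Equivalently, the identity $s_A(x^*)+s_A^*(u)=x^*(u)$ just obtained is the Fenchel--Young equality characterizing $u\in\partial s_A(x^*)$, and $s_A^*$ is the indicator function of $\overline{\conv}A$ by the Fenchel--Moreau duality recalled before~\eqref{eq:supportFctn}; one may phrase the whole argument through this if preferred.)

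For the ``moreover'', assume $x^*\ne0$. Any $u\in\partial s_A(x^*)=\argmax_C x^*$ lies on the affine hyperplane $\{z:\,x^*(z)=s_A(x^*)\}$, which supports $C=\conv A$; hence $u$ cannot be interior to $\conv A$, for otherwise $u+\varepsilon w\in\conv A$ for some small $\varepsilon>0$ and some $w$ with $x^*(w)>0$, contradicting $x^*(\cdot)\le s_A(x^*)$ on $\conv A$. Since $u\in C=\overline{\conv}A$ anyway, this yields $u\in\partial\conv A$. I expect the real obstacle to be the reverse inclusion, and within it the one step that genuinely uses convexity: upgrading ``$u$ belongs to every closed half-space containing $C$'' to ``$u\in C$'', which is exactly the Hahn--Banach separation theorem; the remaining manipulations are routine bookkeeping with positive homogeneity.
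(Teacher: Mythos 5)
Your proof is correct, and it shares the paper's first step -- the reduction $s_{A}=s_{\overline{\conv}A}$ -- but from there the two arguments diverge. The paper simply invokes \citet[Corollary~23.5.3]{Rockafellar1970} to identify $\partial s_{\overline{\conv}A}(x^{*})$ with the $\argmax$-set and stops there ("so that the assertion follows"), whereas you prove that identification from scratch: the inclusion $\supseteq$ by direct verification of the subgradient inequality, and $\subseteq$ by first squeezing $x^{*}(u)=s_{A}(x^{*})$ via the test functionals $z^{*}=0$ and $z^{*}=\lambda x^{*}$ (this is exactly the Fenchel--Young equality) and then upgrading "$u$ lies in every closed half-space containing $\overline{\conv}A$" to "$u\in\overline{\conv}A$" by Hahn--Banach separation. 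You correctly identify that separation step as the one place convexity and closedness are genuinely used. Two further points in your favor: you actually prove the "moreover" inclusion $\partial s_{A}(x^{*})\subseteq\partial\conv A$ by a supporting-hyperplane argument, which the paper's proof does not spell out at all; and you correctly flag that this inclusion requires $x^{*}\neq0$ (for $x^{*}=0$ one gets $\partial s_{A}(0)=\overline{\conv}A$, which is not contained in the boundary once $\conv A$ has interior), a caveat the lemma's "for every $x^{*}\in X^{*}$" silently ignores. The trade-off is the usual one: the paper's citation is shorter and leans on Rockafellar's machinery, while your version is self-contained, works directly from the paper's own definition of the subdifferential, and exposes exactly which hypotheses (closedness, convexity, compactness via Carath\'eodory in $\mathbb{R}^{d}$) each step consumes.
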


\begin{proof}
Note first that 
\[
s_{A}=s_{\overline{\conv}A}.
\]
Indeed, it is evident that $s_{A}\le s_{\overline{\conv}A}$ by definition;
for the converse choose $a=\sum_{i=1}^{n}\lambda_{i}a_{i}\in\conv A$
with $a_{i}\in A$, $\lambda_{i}>0$, $i=1,\dots,n$ and $\sum_{i=1}^{n}\lambda_{i}=1$
so that $s_{\overline{\conv}A}(x^{*})<x^{*}(a)+\varepsilon$. By linearity
we also have that $s_{\overline{\conv}A}(x^{*})<x^{*}(a_{i^{*}})+\varepsilon$,
where $i^{*}$ is chosen so that $x^{*}(a_{i^{*}})\ge x^{*}(a_{i})$
for all $i=1,\dots,n$.

We deduce then from \citet[Corollary 23.5.3]{Rockafellar1970} that
$\argmax_{a\in\overline{\conv}A}x^{*}(a)=\partial s_{\overline{\conv}A}\left(x^{*}\right)$,
so that the assertion follows.
\end{proof}
\begin{rem}[Hörmander's theorem, cf.\ \citet{Hoermander1955}]
The concepts of Hausdorff distance and support functions introduced
above link to a nice ensemble, as the deviation $\mathbb{D}$ can
also be states as $\mathbb{D}(A,C)=\sup_{a\in A}\inf_{c\in C}\sup_{\left\Vert x^{*}\right\Vert _{*}\le1}x^{*}(a-c)$.
It follows from the max-min inequality that 
\begin{align}
\mathbb{D}(A,C) & =\sup_{a\in A}\inf_{c\in C}\sup_{\left\Vert x^{*}\right\Vert _{*}\le1}x^{*}(a-c)\ge\sup_{a\in A}\sup_{\left\Vert x^{*}\right\Vert _{*}\le1}\inf_{c\in C}x^{*}(a-c)\label{eq:5-5}\\
 & =\sup_{a\in A}\sup_{\left\Vert x^{*}\right\Vert _{*}\le1}\left\{ x^{*}(a)-\sup_{c\in C}x^{*}(c)\right\} =\sup_{\left\Vert x^{*}\right\Vert _{*}\le1}\left\{ s_{A}(x^{*})-s_{C}(x^{*})\right\} .\nonumber 
\end{align}
Assuming that $A$ and $C$ are convex it follows from compactness
of the dual ball and the minimax theorem (\citet[Theorem 2]{Fan1953})
that equality holds in (\ref{eq:5-5}), hence 
\[
\mathbb{D}(\conv A,\,\conv C)=\sup_{\left\Vert x^{*}\right\Vert _{*}\le1}\left\{ s_{A}\left(x^{*}\right)-s_{C}\left(x^{*}\right)\right\} ,
\]
the Pompei\textendash Hausdorff distance thus is
\begin{equation}
\mathbb{H}\left(\conv A,\,\conv C\right)=\sup_{\left\Vert x^{*}\right\Vert _{*}\le1}\left|s_{A}\left(x^{*}\right)-s_{C}\left(x^{*}\right)\right|,\label{eq:Hausdorff}
\end{equation}
expressed in terms of seminorms. These observations and~(\ref{eq:Norm})
convincingly relate the Pompeiu\textendash Hausdorff distance with
Min\-kowski addition of convex sets.
\end{rem}

It follows from the preceding discussion and remarks that for relatively
compact sets $A$ and $C$ there are $a\in\partial A$, $c\in\partial C$
and $\left\Vert x^{*}\right\Vert _{*}\le1$ such that $\mathbb{D}\left(A,C\right)=\left\Vert c-a\right\Vert =x^{*}(a-c)$.
$x^{*}$ is an outer normal for both sets, $\conv A$ and $\conv C$.

\section{\label{sec:4}The relative boundary of the expected value}

We shall use tangent planes to investigate the convex expected value
set. To this end let $f\in X^{*}$ be a linear functional. By Aumann's
Theorem, the set-valued mapping 
\begin{equation}
\omega\mapsto\partial s_{Y(\omega)}\left(f\right)\subseteq\mathbb{R}^{d}\label{eq:selection1}
\end{equation}
is measurable and $\E\partial s_{Y}\left(f\right)=\int\partial s_{Y(\omega)}\left(f\right)P(\mathrm{d}\omega)$
is non-empty, compact and convex (cf.\ \citet[Theorem~2]{Aumann1965}).
We continue with a characterization of this expected value. For a
related result on the interchangeability of the differentiation $\partial$
and expectation $\E$ we refer to \citet{Rockafellar1982}.
\begin{prop}
\label{thm:4}Suppose that $f\in X^{*}$. Then 
\begin{equation}
\E\partial s_{Y}(f)=\partial s_{\E Y}(f)\subseteq\partial\E Y.\label{eq:11}
\end{equation}
\end{prop}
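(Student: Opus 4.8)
The plan is to prove the two assertions in \eqref{eq:11} separately, starting with the identity $\E\partial s_Y(f)=\partial s_{\E Y}(f)$ and then deriving the inclusion $\partial s_{\E Y}(f)\subseteq\partial\E Y$ as an immediate consequence of Lemma~\ref{thm:SupArgMax} applied to $A=\E Y$. The inclusion part is essentially free: since $\E Y$ is compact and convex in our standing setting, Lemma~\ref{thm:SupArgMax} gives $\partial s_{\E Y}(f)=\argmax_{\E Y}f\subseteq\partial\E Y$, so the real work lies entirely in the equality.

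For the equality, the cleaner of the two inclusions is $\E\partial s_Y(f)\subseteq\partial s_{\E Y}(f)$. I would take an arbitrary point $u\in\E\partial s_Y(f)$, which by definition of the Aumann expectation is $u=\int_\Omega \mathbf{u}(\omega)\,P(\mathrm d\omega)$ for some integrable selection $\mathbf{u}(\omega)\in\partial s_{Y(\omega)}(f)=\argmax_{Y(\omega)}f$. In particular $\mathbf{u}$ is an integrable selection of $Y$ itself, so $u\in\E Y$; and for every $x^*\in X^*$ the subgradient inequality $s_{Y(\omega)}(x^*)-s_{Y(\omega)}(f)\ge x^*(\mathbf{u}(\omega))-f(\mathbf{u}(\omega))$ holds pointwise. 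Integrating this inequality over $\Omega$ and using that expectation of support functions commutes with the support function of the expectation — i.e.\ $\int s_{Y(\omega)}(x^*)\,P(\mathrm d\omega)=s_{\E Y}(x^*)$, which is the standard fact that $s_{\E Y}=\E s_Y$ for integrably bounded random sets (this follows from Hörmander's identification of support functions with the isometric embedding into $C(S^{d-1})$, under which the Aumann expectation becomes the Bochner/pointwise expectation) — yields $s_{\E Y}(x^*)-s_{\E Y}(f)\ge x^*(u)-f(u)$ for all $x^*$, which is exactly $u\in\partial s_{\E Y}(f)$.

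The reverse inclusion $\partial s_{\E Y}(f)\subseteq\E\partial s_Y(f)$ is the main obstacle and needs a measurable-selection argument. Given $u\in\partial s_{\E Y}(f)=\argmax_{\E Y}f$, we know $u=\int \mathbf{y}\,\mathrm dP$ for some integrable selection $\mathbf{y}$ of $Y$ achieving the maximum of $f$ over $\E Y$. The point is to show such a selection can be chosen so that $\mathbf{y}(\omega)$ maximizes $f$ over $Y(\omega)$ for almost every $\omega$, i.e.\ $\mathbf{y}(\omega)\in\partial s_{Y(\omega)}(f)$ a.e.; then $u\in\E\partial s_Y(f)$ by definition. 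I would argue that $f(\mathbf{y}(\omega))\le s_{Y(\omega)}(f)$ pointwise, with integral equality $\int f(\mathbf{y})\,\mathrm dP=f(u)=s_{\E Y}(f)=\int s_{Y(\omega)}(f)\,\mathrm dP$, forcing $f(\mathbf{y}(\omega))=s_{Y(\omega)}(f)$ a.e., which is precisely the maximality condition. The only subtlety is knowing a priori that the maximizing selection $\mathbf{y}$ exists and is measurable; this is where I would invoke Aumann's theorem together with a standard measurable-selection result (e.g.\ the Castaing representation / Kuratowski–Ryll-Nardzewski theorem), noting that $\omega\mapsto\partial s_{Y(\omega)}(f)$ is a nonempty-compact-valued measurable multifunction by the measurability already recorded in \eqref{eq:selection1}, so it admits integrable selections, and the expectation of any of them lands in $\partial s_{\E Y}(f)$ by the first inclusion — hence the two sets coincide. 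The delicate bookkeeping is ensuring the pointwise-vs-integral maximality equivalence is applied to the right selection; everything else is routine once $s_{\E Y}=\E s_Y$ is in hand.
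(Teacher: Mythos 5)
Your proof is correct, and for the harder half it takes a genuinely different route from the paper. The forward inclusion $\E\partial s_{Y}(f)\subseteq\partial s_{\E Y}(f)$ is essentially the paper's argument: the paper integrates the pointwise maximality $f(\mathbf{e}(\omega))\ge f(\mathbf{y}(\omega))$ against arbitrary selections $\mathbf{y}$ and then invokes the $\argmax$ characterization of Lemma~\ref{thm:SupArgMax}, while you integrate the subgradient inequality directly; these are the same computation in two notations. For the reverse inclusion the paper argues by contradiction, separating a putative $e\in\partial s_{\E Y}(f)\setminus\E\partial s_{Y}(f)$ from the convex compact set $\E\partial s_{Y}(f)$ and exhibiting a positive-measure set on which a maximizing selection fails to maximize. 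You instead use the identity $s_{\E Y}=\E s_{Y}$ to squeeze: for an integrable selection $\mathbf{y}$ representing $u\in\argmax_{\E Y}f$, the pointwise bound $f(\mathbf{y}(\omega))\le s_{Y(\omega)}(f)$ integrates to an equality and therefore holds with equality almost everywhere, so $\mathbf{y}$ is already a selection of $\partial s_{Y(\cdot)}(f)$. Your route is arguably cleaner, since the paper's separation step takes the separating functional to be $f$ itself, which separation alone does not deliver, whereas you need no separation at all; the price is importing $s_{\E Y}=\E s_{Y}$, whose nontrivial direction requires exactly the measurable selection of $\omega\mapsto\argmax_{Y(\omega)}f$ that you correctly flag via~(\ref{eq:selection1}). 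One small caveat: your closing remark that nonemptiness of $\E\partial s_{Y}(f)$ together with the first inclusion gives equality is a non sequitur as phrased; the equality is carried entirely by the almost-everywhere equality argument, which is complete, so nothing substantive is missing.
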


\begin{proof}
Let $e\in\E\partial s_{Y}\left(f\right)$ have the representation
$e=\int\mathbf{e}\mathrm{d}P$ and recall from Lemma~\ref{thm:SupArgMax}
that $\mathbf{e}(\omega)\in\partial s_{Y(\omega)}\left(f\right)=\argmax_{y\in Y(\omega)}f(y)$.
Note as well that $\mathbf{e}(\cdot)$ can be chosen measurable by
the Kuratowski and Ryll\textendash Nardzewski measurable selection
theorem, cf.\ \citet[Volume~II, page~36]{Bogachev2007} or \citet[Theorem~2]{Aumann1965}.
Hence, for every measurable $\mathbf{y}$ with $\mathbf{y}(\cdot)\in Y(\cdot)$
we have that $f(\mathbf{e}(\omega))\ge f\left(\mathbf{y}(\omega)\right)$.
Define $y:=\int\mathbf{y}\mathrm{d}P$, then  
\begin{align*}
f(e) & =f\left(\int\mathbf{e}\mathrm{d}P\right)=\int f(\mathbf{e}(\omega))P(\mathrm{d}\omega)\\
 & \ge\int f\left(\mathbf{y}(\omega)\right)P(\mathrm{d}\omega)=f\left(\int\mathbf{y}\mathrm{d}P\right)=f(y)
\end{align*}
by linearity of $f$ for every measurable selection $\mathbf{y}$.
Hence, $e\in\argmax_{y\in\E Y}f(y)=\partial s_{\E Y}\left(f\right)$
by~(\ref{eq:Argmax}), which is the inclusion $\subseteq$ of set-equality
in~(\ref{eq:11}). 

For the converse assume that $e\in\partial s_{\E Y}\left(f\right)\backslash\E\partial s_{Y}\left(f\right)$.
As $\E\partial s_{Y}\left(f\right)$ is convex and compact it follows
from the separation theorem that there is an $\alpha\in\mathbb{R}$
such that 
\begin{equation}
f(e)=\int f(\mathbf{e})\mathrm{d}P>\alpha>\int f(\mathbf{y})\mathrm{d}P\label{eq:10}
\end{equation}
for every measurable $\mathbf{y}(\omega)\in\partial s_{Y(\omega)}(f)$.
Notice that $\mathbf{y}(\omega)\in\partial s_{Y(\omega)}(f)=\argmax_{y\in Y(\omega)}f(y)\subseteq Y(\omega)$.
By the particular choice of $e$ it follows for $y:=\int\mathbf{y}\mathrm{d}P\in\E Y$
that $f(e)>f(y)$. 

However, by~(\ref{eq:10}), on a set of strictly positive $P$\nobreakdash-measure
we have that 
\[
P\bigl(\left\{ \omega\colon f(\mathbf{e}(\omega))>f\left(\mathbf{y}(\omega)\right)\right\} \bigr)>0.
\]
On this set $\mathbf{y}(\omega)\notin\argmax_{y\in Y(\omega)}f(y)=\partial s_{Y(\omega)}\left(f\right)$,
because $f(\mathbf{e}(\omega))>f(\mathbf{y}(\omega))$. This is a
contradiction, because $y\in\partial s_{Y(\omega)}\left(f\right)=\argmax_{Y(\omega)}(f)$
$P$\nobreakdash-almost everywhere. 

The remaining inclusion follows from Lemma~\ref{thm:SupArgMax}.
\end{proof}
\bigskip{}
We deduce from the previous proposition that the set-valued subdifferential
$\partial$ and the set-valued expectation $\E$ commute. Moreover,
the set-valued subdifferential of the support function basically is
its $\argmax$\nobreakdash-set, which is an element from the boundary
of the respective set. This is another hint that the \emph{boundary}
$\partial\E Y$ plays a central role, which we intend to investigate
in more detail in what follows.

\subsection{Extreme and exposed points}

It will be convenient to classify the boundary points of the convex
set $\E Y$ based on the following definitions. 
\begin{defn}[Extreme points, exposed points]
\label{def:Strict}Let $K$ be a convex set.
\begin{enumerate}
\item \label{enu:extreme}$k\in K$ is an \emph{extreme point }if $k=\frac{1}{2}\left(k_{1}+k_{2}\right)$
for $k_{1}\in K$ and $k_{2}\in K$ implies that $k_{1}=k_{2}$.
\item $k\in K$ is an \emph{exposed point }if there is a linear, continuous
functional $f$ such that $f(k)>f(x)$ for all $x\in K\backslash\left\{ k\right\} $.
$f$ is said to \emph{expose }$k\in K$. The collection of all exposed
points of the set $K$ is denoted b\textcolor{blue}{y} $\exp K$. 
\item \label{enu:strict}$K$ is strictly convex, if $\left\{ (1-\lambda)k_{0}+\lambda k_{1}\colon0<\lambda<1\right\} \subseteq\overset{\circ}{K}$,
the interior of $K$, whenever $k_{0},\,k_{1}\in K$ and $k_{0}\neq k_{1}$.
\end{enumerate}
\end{defn}

\begin{rem}
\label{remStrictlyConvex}The point $e$ in Figure~\ref{fig:FacetHB-1}
on page~\pageref{fig:FacetHB-1} is extreme, but not exposed. 
\end{rem}

\begin{rem}[Boundary points of strictly convex sets are exposed]
If $K$ is strictly convex, then every boundary point $k\in\partial K$
is exposed. Indeed, let $f$ be a linear, separating functional such
that $f(k)>f(x)$ for all $x\in\overset{\circ}{K}$ ($f$ exists by
the Hahn\textendash Banach theorem). Suppose there were another $\tilde{k}\in K$
such that $f(\tilde{k})=f(k)$. As $\tfrac{1}{2}\left(k+\tilde{k}\right)\in\overset{\circ}{K}$
by assumption it follows that $f(k)>f\left(\tfrac{1}{2}\left(k+\tilde{k}\right)\right)=\tfrac{1}{2}\left(f\left(k\right)+f(\tilde{k})\right)=f(k)$,
which is a contradiction. Hence $f$ exposes $k$ and $\partial K=\exp K$. 
\end{rem}

\subsection{The boundary of $\E Y$}

We return to the geometry of $\E Y$ and discuss exposed points of
$\E Y$ first. The next theorem elaborates that exposed points of
$\E Y$ are comparably seldom, as being exposed in $\E Y$ means that
the exposing functional exposes points of $Y(\omega)$ for almost
every $\omega\in\Omega$.
\begin{thm}
\label{thm:6}Let $e$ be an exposed point of $\E Y$, exposed by
a linear functional $f$. Then $f$ exposes a single point of $Y(\omega)$
$P$\nobreakdash-almost everywhere.

Moreover, there is just a single measurable selection $\mathbf{e}$
such that $e=\int\mathbf{e}\mathrm{d}P$, i.e., $\mathbf{e}$ is $P$\nobreakdash-almost
everywhere unique.
\end{thm}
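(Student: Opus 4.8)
The plan is to use Proposition~\ref{thm:4} together with a pointwise argument showing that the expectation $\E\partial s_Y(f)$ cannot be a singleton unless $\partial s_{Y(\omega)}(f)$ is a singleton almost everywhere. First I would record the setup: since $e$ is exposed in $\E Y$ by $f$, we have $\partial s_{\E Y}(f)=\{e\}$, because the exposing functional attains its maximum over $\E Y$ at the single point $e$. By Proposition~\ref{thm:4} this gives $\E\partial s_Y(f)=\{e\}$. The set-valued map $\omega\mapsto\partial s_{Y(\omega)}(f)=\argmax_{y\in Y(\omega)}f(y)$ is measurable, compact-convex-valued, and integrably bounded (it is contained in $Y(\omega)$, which has an integrable envelope), so by the measurable selection theorem it admits a measurable selection $\mathbf{e}(\cdot)$, and every such selection integrates to an element of the singleton $\{e\}$, hence $\int\mathbf{e}\,\mathrm dP=e$ for \emph{every} measurable selection $\mathbf{e}$ of $\partial s_Y(f)$.

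The key step is then: if a compact-convex-valued, integrably bounded random set $Z(\omega):=\partial s_{Y(\omega)}(f)$ satisfies $\E Z=\{e\}$ (a single point), then $Z(\omega)=\{e\}$ for $P$-almost every $\omega$. I would prove this by a separation/support-function argument. Fix any $g\in X^*$. By Proposition~\ref{thm:4} applied to $Z$ (or directly from the definition of the Aumann expectation together with a selection argument), $s_{\E Z}(g)=\int s_{Z(\omega)}(g)\,P(\mathrm dP)$ — more carefully, one picks for each $\omega$ a measurable maximizer $\mathbf z_g(\omega)\in\argmax_{z\in Z(\omega)}g(z)$ so that $\int g(\mathbf z_g)\,\mathrm dP=g(e)$ since $\int\mathbf z_g\,\mathrm dP\in\E Z=\{e\}$, while any other measurable selection $\mathbf z$ gives $\int g(\mathbf z)\,\mathrm dP=g(e)$ as well. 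Subtracting, $\int\big(g(\mathbf z_g(\omega))-g(\mathbf z(\omega))\big)\,\mathrm dP=0$, and the integrand is nonnegative, hence $g(\mathbf z_g(\omega))=g(\mathbf z(\omega))$ a.e. Since $\mathbf z$ was an arbitrary measurable selection, $g$ is a.e.\ constant on $Z(\omega)$, i.e.\ $s_{Z(\omega)}(g)=-s_{Z(\omega)}(-g)$ a.e.; the null set depends on $g$, but running $g$ through a countable dense subset of $S^{d-1}$ and using continuity of support functions, we get that $s_{Z(\omega)}(\cdot)$ is linear for a.e.\ $\omega$, which for a compact convex set forces $Z(\omega)$ to be a singleton $\{\mathbf e(\omega)\}$ a.e. This says precisely that $f$ exposes a single point of $Y(\omega)$ for almost every $\omega$, proving the first assertion.

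For the uniqueness of the selection: any measurable $\mathbf e$ with $e=\int\mathbf e\,\mathrm dP$ and $\mathbf e(\omega)\in Y(\omega)$ and $f(\mathbf e(\omega))$ maximal — but one first has to argue that \emph{any} selection representing the exposed point $e$ must in fact take values in $\partial s_{Y(\omega)}(f)$ a.e. This follows as in the proof of Proposition~\ref{thm:4}: if $\mathbf e(\omega)\notin\argmax_{y\in Y(\omega)}f(y)$ on a set of positive measure, comparing against a measurable maximizer $\mathbf z_f$ gives $\int f(\mathbf z_f)\,\mathrm dP>\int f(\mathbf e)\,\mathrm dP=f(e)$, contradicting $f(e)=\max_{y\in\E Y}f(y)\ge f(\int\mathbf z_f\,\mathrm dP)$. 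Hence $\mathbf e(\omega)\in Z(\omega)=\{\mathbf e(\omega)\}$ a.e., and since $Z(\omega)$ is a.e.\ a singleton by the previous paragraph, $\mathbf e$ is uniquely determined a.e. The main obstacle I anticipate is the measurability bookkeeping — ensuring that the maximizers $\mathbf z_g$ can be chosen measurably (Kuratowski--Ryll-Nardzewski) and that the "null set depending on $g$" can be made uniform by the countable-dense-subset trick while respecting that $P$ is a complete probability measure; the rest is a routine nonnegative-integrand-with-zero-integral argument.
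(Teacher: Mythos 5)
Your proof is correct in substance, but it takes a genuinely different route from the paper, essentially reversing the order of the two assertions. The paper proves the uniqueness of the selection \emph{first}: it takes two measurable selections $\mathbf{e}_{1},\mathbf{e}_{2}$ of $\argmax_{Y(\cdot)}f$ with $\int\mathbf{e}_{1}\mathrm{d}P=\int\mathbf{e}_{2}\mathrm{d}P=e$ and $P(\mathbf{e}_{1}\neq\mathbf{e}_{2})>0$, picks a functional $\ell$ with $P\bigl(\ell(\mathbf{e}_{1})\neq\ell(\mathbf{e}_{2})\bigr)>0$, and swaps the two selections pointwise according to the sign of $\ell(\mathbf{e}_{1}-\mathbf{e}_{2})$; the swapped selections still take values in $\argmax_{Y(\cdot)}f$ but have distinct integrals, contradicting $\E\partial s_{Y}(f)=\partial s_{\E Y}(f)=\{e\}$. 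The a.e.\ singleton-valuedness of $\omega\mapsto\partial s_{Y(\omega)}(f)$ is then read off from this uniqueness. You instead establish the singleton-valuedness first, by testing $Z(\omega)=\partial s_{Y(\omega)}(f)$ against measurable maximizers and minimizers of each $g$ in a countable dense subset of the sphere and using the nonnegative-integrand-with-zero-integral argument, and then deduce uniqueness of the selection. What each approach buys: the paper's swap trick is shorter and avoids the dense-subset/uniform-null-set step, but its final implication (from ``the selection is unique'' to ``the argmax set is a.e.\ a singleton'') is stated without proof and in fact requires exactly the measurable max/min selection construction you carry out, so your version fills in a detail the paper glosses over; your preliminary step that \emph{any} selection representing $e$ must lie in $\argmax_{Y(\omega)}f$ a.e.\ is likewise needed by (and equally tersely justified in) the paper's proof. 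One slip to fix: your intermediate claim ``$\E Z=\{e\}$ implies $Z(\omega)=\{e\}$ a.e.'' is false as stated --- the fibers could be distinct singletons averaging to $e$ --- but what your argument actually proves, and all you subsequently use, is that $Z(\omega)$ is a singleton $\{\mathbf{e}(\omega)\}$ a.e., so the proof is unaffected.
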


\begin{proof}
Let the exposed point $e\in\E Y$ have the representation $e=\int\mathbf{e}_{1}\mathrm{d}P$,
where $\mathbf{e}_{1}(\cdot)\in Y(\cdot)$ is a measurable selection
according~(\ref{eq:Expectation}). By definition of an exposed point
$\left\{ e\right\} =\argmax_{y\in\E Y}f(y)$ and by Theorem~\ref{thm:4}
we have that $e\in\E\partial s_{Y}\left(f\right)$, which means that
$\mathbf{e}_{1}(\omega)\in\argmax_{y\in Y(\omega)}f(y)$ $P$\nobreakdash-a.e.
If this representation were not unique, then there is another measurable
selection $\mathbf{e}_{2}(\omega)\in\argmax_{y\in Y(\omega)}f(y)$
with $e=\int\mathbf{e}_{2}\mathrm{d}P$ and $P\left(\mathbf{e}_{1}\neq\mathbf{e}_{2}\right)>0$.
In this situation there is a linear functional $\ell$ such that $P\bigl(\ell\left(\mathbf{e}_{1}\right)\neq\ell\left(\mathbf{e}_{2}\right)\bigr)>0$.
Define the random variable $\tilde{\mathbf{e}}_{1}:=\begin{cases}
\mathbf{e}_{1} & \text{if }\ell\left(\mathbf{e}_{1}\right)\le\ell\left(\mathbf{e}_{2}\right),\\
\mathbf{e}_{2} & \text{if }\ell\left(\mathbf{e}_{1}\right)>\ell\left(\mathbf{e}_{2}\right)
\end{cases}$ and $\tilde{\mathbf{e}}_{2}:=\begin{cases}
\mathbf{e}_{2} & \text{if }\ell\left(\mathbf{e}_{1}\right)\le\ell\left(\mathbf{e}_{2}\right),\\
\mathbf{e}_{1} & \text{if }\ell\left(\mathbf{e}_{1}\right)>\ell\left(\mathbf{e}_{2}\right)
\end{cases}$. Notice that $\ell\left(\tilde{\mathbf{e}}_{1}\right)\le\ell\left(\tilde{\mathbf{e}}_{2}\right)$,
and $P\bigl(\ell\left(\tilde{\mathbf{e}}_{1}\right)<\ell\left(\tilde{\mathbf{e}}_{2}\right)\bigr)>0$.
Hence $e=\frac{1}{2}\left(\int\tilde{\mathbf{e}}_{1}\mathrm{d}P+\int\tilde{\mathbf{e}}_{2}\mathrm{d}P\right)$
and $\int\ell\left(\tilde{\mathbf{e}}_{1}\right)\mathrm{d}P<\int\ell\left(\tilde{\mathbf{e}}_{2}\right)\mathrm{d}P$,
and by linearity of $\ell$ thus $e_{1}:=\int\tilde{\mathbf{e}}_{1}\mathrm{d}P\neq\int\tilde{\mathbf{e}}_{2}\mathrm{d}P=:e_{2}$.
This is a contradiction, because $f$ can only expose one unique point
$e\in\E Y$. This proves the second assertion.

The first assertion follows, as $\mathbf{e}(\cdot)\in\argmax_{y\in Y(\cdot)}f(y)$
is $P$\nobreakdash-almost everywhere unique by the second, and $f$
thus exposes $\mathbf{e}(\omega)\in Y(\omega)$. 
\end{proof}
We note the contrapositive statement of the previous theorem, Theorem~\ref{thm:6}. 
\begin{cor}
Suppose that the linear functional $f$ does not expose a point from
$Y(\cdot)$ almost everywhere. Then $f$ does not expose a point of
$\E Y$.
\end{cor}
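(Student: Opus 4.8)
The plan is simply to argue by contraposition, since the statement is the logical contrapositive of Theorem~\ref{thm:6}. Suppose, contrary to the assertion, that the linear functional $f$ \emph{does} expose a point $e$ of $\E Y$. Then the hypotheses of Theorem~\ref{thm:6} are met, and the theorem yields that $f$ exposes a single point of $Y(\omega)$ for $P$\nobreakdash-almost every $\omega\in\Omega$. This directly contradicts the assumption that $f$ does not expose a point from $Y(\cdot)$ almost everywhere, and the claim follows.

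The only point that warrants a line of care is the precise meaning of the hypothesis: ``$f$ does not expose a point from $Y(\cdot)$ almost everywhere'' is to be read as the negation of the event in the conclusion of Theorem~\ref{thm:6}, i.e.\ that there is a set of strictly positive $P$\nobreakdash-measure on which $\argmax_{y\in Y(\omega)}f(y)$ fails to be a singleton. With that reading the implication is immediate, as the measurable selection $\mathbf{e}(\cdot)\in\argmax_{y\in Y(\cdot)}f(y)$ produced in the proof of Theorem~\ref{thm:6} would have to be $P$\nobreakdash-almost everywhere unique.

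There is no substantive obstacle here; the content is entirely carried by Theorem~\ref{thm:6}, and this corollary merely records the reformulation that is convenient for the applications in Sections~\ref{sec:LLN} and~\ref{sec:CLT}, where one wants a sufficient condition \emph{ruling out} exposedness of a candidate boundary point of $\E Y$.
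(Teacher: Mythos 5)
Your proposal is correct and matches the paper exactly: the paper offers no separate argument, stating only that the corollary is the contrapositive of Theorem~3.6, which is precisely the reasoning you give. Your extra remark on how to read the negation of the almost-everywhere statement is a reasonable clarification but adds nothing beyond what the paper intends.
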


The statement of the preceding theorem of course holds true for discrete
distributions as in~(\ref{eq:discrete}), although the proof simplifies
significantly. We record the next lemma to emphasize that the $\argmax$\nobreakdash-set
of the sample means \emph{in addition} is the sample mean of the respective
$\argmax$\nobreakdash-sets\textemdash an observation of further
importance for the sample mean process discussed later.

For the next lemma see also \citep[Theorem~2.8.7]{Zalinescu} or \citet[Theorem~3.5.8]{BotGradWanka2009}.
\begin{lem}
\label{Lem:argmax}Let $Y$ be a random map according (\ref{eq:discrete})
with compact and convex outcome and $f\in X^{*}$. Then 
\begin{equation}
\argmax_{k\in\E Y}f(k)=\sum_{j=1}^{J}p_{j}\argmax_{k\in K_{j}}f(k)=\E\argmax_{k\in Y}f(k).\label{eq:argmax}
\end{equation}
 Moreover 
\begin{equation}
\argmax_{k\in\frac{1}{N}\sum_{i=1}^{N}K_{i}}f(k)=\frac{1}{N}\sum_{i=1}^{N}\argmax_{k\in K_{i}}f(k)\qquad(\omega\in\Omega)\label{eq:argmaxSn}
\end{equation}
for any sequence of compact and convex sets $\left(K_{i}(\omega)\right)_{i=1}^{N}$. 
\end{lem}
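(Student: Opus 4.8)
The plan is to prove both displayed identities by reducing them to the additivity of the $\argmax$/subdifferential operation under Minkowski sums, since Proposition~\ref{thm:4} and Lemma~\ref{thm:SupArgMax} already give $\argmax_{\E Y}f=\partial s_{\E Y}(f)=\E\partial s_Y(f)$, which for a discrete law~(\ref{eq:discrete}) is exactly $\sum_{j=1}^J p_j\,\partial s_{K_j}(f)=\sum_{j=1}^J p_j\argmax_{K_j}f$. So the right-hand equality in~(\ref{eq:argmax}) and the identification with $\E\argmax_{k\in Y}f(k)$ are essentially a specialization of results already in hand; the only genuinely new content is the \emph{algebraic} fact that the support function is additive under Minkowski sums and that the subdifferential of a sum of sublinear functions is the Minkowski sum of the subdifferentials. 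I would therefore isolate that as the core lemma.

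First I would record $s_{A+B}=s_A+s_B$, which is immediate from the definition~(\ref{eq:supportFctn}): $\sup_{a\in A,\,b\in B}f(a+b)=\sup_{a\in A}f(a)+\sup_{b\in B}f(b)$, and similarly $s_{p\cdot A}=p\,s_A$ for $p\ge 0$. Hence $s_{\sum_j p_j K_j}=\sum_j p_j s_{K_j}$ and $s_{\frac1N\sum_i K_i}=\frac1N\sum_i s_{K_i}$. Then I would invoke the Moreau--Rockafellar sum rule for subdifferentials of convex functions (valid here without qualification conditions because the $s_{K_j}$ are finite-valued on all of $X^*$, being support functions of compact sets): $\partial\!\left(\sum_j p_j s_{K_j}\right)(f)=\sum_j p_j\,\partial s_{K_j}(f)$. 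Combining this with Lemma~\ref{thm:SupArgMax}, which says $\partial s_{K_j}(f)=\argmax_{k\in K_j}f(k)$ (the $K_j$ being already convex and compact) and $\partial s_{\E Y}(f)=\argmax_{k\in\E Y}f(k)$, yields
\[
\argmax_{k\in\E Y}f(k)=\partial s_{\E Y}(f)=\sum_{j=1}^J p_j\,\partial s_{K_j}(f)=\sum_{j=1}^J p_j\argmax_{k\in K_j}f(k),
\]
which is~(\ref{eq:argmax}); the last expression equals $\E\argmax_{k\in Y}f(k)$ directly from the discrete-expectation formula $\E Z=\sum_j p_j Z_j$ applied to the random set $\omega\mapsto\argmax_{k\in Y(\omega)}f(k)$, whose measurability is guaranteed by Lemma~\ref{thm:SupArgMax} together with the Kuratowski--Ryll-Nardzewski selection theorem as used in the proof of Proposition~\ref{thm:4}. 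The second identity~(\ref{eq:argmaxSn}) is the same argument with $p_i=1/N$ applied pathwise to the deterministic sets $K_i(\omega)$, so it requires no probabilistic input at all.

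The step I expect to be the only real obstacle is justifying the subdifferential sum rule without a constraint qualification; I would handle this by noting that each $s_{K_j}$ is everywhere finite and continuous on $X^*\cong\mathbb R^d$ (since $K_j$ is compact), so the classical Moreau--Rockafellar theorem applies, or alternatively cite \citet[Theorem~23.8]{Rockafellar1970} directly. An even cleaner route that sidesteps the sum rule entirely for~(\ref{eq:argmax}) is to observe that the statement is literally the special case of Proposition~\ref{thm:4} for discrete $Y$—there the integral $\E\partial s_Y(f)=\int\partial s_{Y(\omega)}(f)\,P(\mathrm d\omega)$ becomes the finite sum $\sum_j p_j\partial s_{K_j}(f)$ by the elementary discrete expectation formula quoted in Section~\ref{sec:Expectation}—so one may simply invoke Proposition~\ref{thm:4} and only needs the elementary $s_{A+B}=s_A+s_B$ plus the sum rule for the pathwise identity~(\ref{eq:argmaxSn}). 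I would present the proof in that economical form: cite Proposition~\ref{thm:4} and Lemma~\ref{thm:SupArgMax} for~(\ref{eq:argmax}), and give the one-line additivity-plus-sum-rule argument for~(\ref{eq:argmaxSn}).
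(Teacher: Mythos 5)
Your proof is correct, but it takes a genuinely different route from the paper. The paper argues directly on elements of the Minkowski sum: it takes $k\in\argmax_{\E Y}f$, writes $k=\sum_{j}p_{j}k_{j}$ with $k_{j}\in K_{j}$, and uses linearity of $f$ to force each $k_{j}\in\argmax_{K_{j}}f$, then proves the reverse inclusion by the symmetric computation; equation~(\ref{eq:argmaxSn}) is the same with $p_{j}$ replaced by $\tfrac1N$. You instead pass to support functions, use the elementary additivity $s_{A+B}=s_{A}+s_{B}$ and $s_{pA}=p\,s_{A}$, and then invoke the Moreau--Rockafellar subdifferential sum rule together with Lemma~\ref{thm:SupArgMax}; your justification of the sum rule (each $s_{K_{j}}$ is finite-valued, hence continuous, on all of $\mathbb{R}^{d}$, so \citet[Theorem~23.8]{Rockafellar1970} applies with no constraint qualification) is sound, and your reduction of~(\ref{eq:argmaxSn}) to the same pathwise algebra is fine. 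What your route buys is brevity and generality: the identity $\partial\bigl(\sum_{j}p_{j}s_{K_{j}}\bigr)(f)=\sum_{j}p_{j}\partial s_{K_{j}}(f)$ does all the work at once and would survive in settings where element-chasing is less convenient. What the paper's route buys is self-containedness: it needs nothing beyond linearity of $f$ and the definition of the Minkowski sum. One small caveat on your ``even cleaner'' alternative of simply citing Proposition~\ref{thm:4}: that proposition is stated under the paper's standing non-atomic assumption, and the lemma is recorded precisely to cover the discrete setting~(\ref{eq:discrete}); the proof of Proposition~\ref{thm:4} does go through for convex-valued discrete $Y$ (compactness and convexity of $\E\partial s_{Y}(f)$ still hold), but you should say so explicitly rather than cite it as if it applied verbatim. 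Your primary sum-rule argument does not depend on this and stands on its own.
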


\begin{proof}
As for (\ref{eq:argmax}) fix $k\in\argmax_{\E Y}f\subseteq\E Y=\sum_{j=1}^{J}p_{j}K_{j}$,
which may be written as $k=\sum_{j}p_{j}k_{j}$ with $k_{j}\in K_{j}$.
For any $y_{j}\in K_{j}$, $y:=\sum_{j}p_{j}y_{j}\in\E Y$. By linearity
and $f$\nobreakdash-maximality of $k$, 
\[
\sum_{j}p_{j}f\left(k_{j}\right)=f(k)\ge f(y)=\sum_{j}p_{j}f\left(y_{j}\right)
\]
for any $y_{j}\in K_{j}$, hence $k_{j}\in\argmax_{K_{j}}f$. This
proves that $\argmax_{\E Y}f\subseteq\sum_{j}p_{j}\argmax_{K_{j}}f$.

Conversely observe first that any $k\in\sum_{j}p_{j}\argmax_{K_{j}}f$
has a representation $k=\sum_{j}p_{j}k_{j}$ for $k_{j}\in\argmax_{K_{j}}f$.
As $k_{j}\in\argmax_{K_{j}}f\subseteq K_{j}$ it is thus obvious that
$k=\sum_{j}p_{j}k_{j}\in\sum_{j}p_{j}K_{j}=\E Y$. Now pick any $y\in\E Y$
with representation $y=\sum_{j}p_{j}y_{j}$ and $y_{j}\in K_{j}$.
By linearity and maximality of~$k_{j}$, 
\[
f(k)=\sum_{j}p_{j}f(k_{j})\ge\sum_{j}p_{j}f(y_{j})=f(y),
\]
hence $k\in\argmax_{\E Y}f$, that is $\sum_{j}p_{j}\argmax_{K_{j}}f\subseteq\argmax_{\E Y}f$.
Summarizing the inclusions, $\argmax_{\E Y}f=\sum_{j}p_{j}\argmax_{K_{j}}f$.
By $\E\argmax_{Y}f=\sum_{j}p_{j}\argmax_{K_{j}}f$ the assertion finally
follows.

Equation~(\ref{eq:argmaxSn}) verifies along the same lines as the
proof for~(\ref{eq:argmax}), but $p_{j}$ replaced by~$\frac{1}{N}$. 
\end{proof}
The following two theorems address the other properties introduced
in Definition~\ref{def:Strict}, which are strict convexity (Theorem~\ref{thm:Strict}
below) and extreme points (Theorem~\ref{thm:Extreme}).
\begin{thm}
\label{thm:Strict}Let $Y$ be strictly convex almost surely. Then
$\E Y$ is strictly convex as well.
\end{thm}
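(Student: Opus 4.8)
The plan is to reduce strict convexity of $\E Y$ to a statement about support functions and then to push that statement through the expectation using Proposition~\ref{thm:4}. The reduction I would use is: \emph{a compact convex set $K$ is strictly convex as soon as $\partial s_K(f)=\argmax_{k\in K}f(k)$ is a singleton for every $f\in X^{*}\setminus\{0\}$.} Indeed, if the open segment between two distinct points $k_{0},k_{1}\in K$ contained a point $p=(1-\lambda)k_{0}+\lambda k_{1}$ (with $0<\lambda<1$) outside $\overset{\circ}{K}$, then $p\in\partial K$ and any supporting functional $f\neq 0$ at $p$ would satisfy $f(k_{0})\le f(p)=s_K(f)$ and $f(k_{1})\le f(p)$; since $f(p)$ is a proper convex combination of $f(k_{0})$ and $f(k_{1})$, both must equal $f(p)$, so $\{k_{0},k_{1}\}\subseteq\partial s_K(f)$, contradicting the singleton property. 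Hence it suffices to prove that $\partial s_{\E Y}(f)$ is a single point for every $f\neq 0$.

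Fix $f\in X^{*}\setminus\{0\}$. The first step is to observe that strict convexity of $Y(\omega)$ forces $\partial s_{Y(\omega)}(f)=\argmax_{y\in Y(\omega)}f(y)$ to be a singleton for $P$\nobreakdash-almost every $\omega$: by Lemma~\ref{thm:SupArgMax} this $\argmax$ set is a convex compact subset of $\partial Y(\omega)$, and if it contained two distinct points $a\neq b$ their midpoint would be a boundary point of $Y(\omega)$ lying on an open segment, contradicting strict convexity of $Y(\omega)$. Let $\mathbf{e}(\cdot)$ be the associated measurable selection (Kuratowski and Ryll\textendash Nardzewski), which is integrable because $\left\Vert \mathbf{e}(\cdot)\right\Vert \le\left\Vert Y(\cdot)\right\Vert $ is integrable by the standing hypothesis. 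Every integrable selection of $\omega\mapsto\partial s_{Y(\omega)}(f)$ then agrees with $\mathbf{e}$ $P$\nobreakdash-a.e., so the Aumann expectation collapses: $\E\,\partial s_Y(f)=\bigl\{\int_{\Omega}\mathbf{e}\,\mathrm{d}P\bigr\}$.

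Now Proposition~\ref{thm:4} gives $\partial s_{\E Y}(f)=\E\,\partial s_Y(f)=\bigl\{\int_{\Omega}\mathbf{e}\,\mathrm{d}P\bigr\}$, a singleton, and by the reduction above $\E Y$ is strictly convex. The possibly degenerate case in which $\E Y$ is lower\nobreakdash-dimensional is covered automatically: the argument still yields that $\partial s_{\E Y}(f)$ is a singleton for every $f\neq 0$, and applied to a functional $g$ vanishing on the affine hull of $\E Y$ (so that $\argmax_{\E Y}g=\E Y$) it forces $\E Y$ itself to be a single point, which is strictly convex in the trivial way.

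The only step that needs genuine care is the support-function reduction in the first paragraph: one must make sure that ``every exposed face is a point'' really delivers the open-segment condition of Definition~\ref{def:Strict}, and not merely ``no boundary segment is exposed'', and one must not implicitly assume $\E Y$ is full-dimensional. The supporting-hyperplane argument at the midpoint settles this cleanly. The remaining ingredients\textemdash the measurable selection, integrability of that selection, and the collapse of the Aumann integral of an almost surely single-valued map\textemdash are routine given integrable boundedness of $Y$ and Proposition~\ref{thm:4}.
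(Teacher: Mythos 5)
Your proof is correct, but it follows a genuinely different route from the paper's. The paper argues primally: it takes $k_{1}\neq k_{2}$ in $\E Y$ with selections $\mathbf{k}_{1},\mathbf{k}_{2}$, passes to a positive-measure set $A$ on which $\left\Vert \mathbf{k}_{1}-\mathbf{k}_{2}\right\Vert >\varepsilon$, notes that strict convexity places the midpoint selection $\mathbf{k}=\tfrac12(\mathbf{k}_{1}+\mathbf{k}_{2})$ in the interior of $Y(\omega)$ there, and then perturbs $\mathbf{k}$ by a fixed vector $x$ on $A$ to exhibit an entire ball $B_{\varepsilon P(A)}(0)$ around $\tfrac12(k_{1}+k_{2})$ inside $\E Y$. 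You instead go through the dual: strict convexity of $K$ is equivalent to (in the direction you need, implied by) all exposed faces $\partial s_{K}(f)$ being singletons; almost-sure strict convexity of $Y$ makes $\partial s_{Y(\cdot)}(f)$ a.s.\ single-valued, so its Aumann integral collapses to a point, and Proposition~\ref{thm:4} transports this to $\partial s_{\E Y}(f)$. Both proofs are valid, and each buys something. The paper's argument is self-contained and directly exhibits the interior ball, but as written it is terse at exactly the point you avoid: strict convexity gives $\mathbf{k}(\omega)\in\overset{\circ}{Y(\omega)}$ on $A$, yet the claim that $\mathbf{k}(\omega)+x\in Y(\omega)$ for \emph{all} $x\in B_{\varepsilon}(0)$ needs a uniform interior radius, which one obtains only after shrinking to a further positive-measure subset $A_{n}=A\cap\{\omega:B_{1/n}(\mathbf{k}(\omega))\subseteq Y(\omega)\}$. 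Your functional-by-functional approach sidesteps any such uniformity issue entirely, at the price of relying on Proposition~\ref{thm:4} and on the exposed-face characterization of strict convexity; you were right to flag the latter as the delicate step, and your supporting-hyperplane argument at an interior point of the segment (together with the degenerate lower-dimensional case) does close it.
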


\begin{proof}
Let $k_{1}$, $k_{2}\in\E Y$ be chosen so that $k_{1}\not=k_{2}$
and let $\mathbf{k}_{1}$ and $\mathbf{k}_{2}$ be measurable selections
so that $k_{1}=\int\mathbf{k}_{1}\mathrm{d}P$ and $k_{2}=\int\mathbf{k}_{2}\mathrm{d}P$.
Note, that there is a measurable set $A$ with $P(A)>0$ and $A\subset\left\{ \left\Vert \mathbf{k}_{1}-\mathbf{k}_{2}\right\Vert >\varepsilon\right\} $
for some $\varepsilon>0$. For $x\in B_{\varepsilon}(0)$ fixed define
\[
\mathbf{k}:=\frac{1}{2}\mathbf{k}_{1}+\frac{1}{2}\mathbf{k}_{2}\text{ and }\mathbf{k}_{x}(\omega):=\mathbf{k}(\omega)+\begin{cases}
x & \text{if }\omega\in A,\\
0 & \text{if }\omega\not\in A.
\end{cases}
\]
By construction, $\mathbf{k}$ and $\mathbf{k}_{x}$ are measurable
selections. However, we have that $\int\mathbf{k}_{x}\mathrm{d}P=\int\mathbf{k}\mathrm{d}P+x\cdot P(A)$.
As $x\in B_{\varepsilon}(0)$ was chosen arbitrarily it follows that
$\int\mathbf{k}\mathrm{d}P+B_{\varepsilon\cdot P(A)}(0)\in\E Y$,
i.e., $\frac{1}{2}k_{1}+\frac{1}{2}k_{2}$ is in the interior of $\E Y$,
which is the assertion.
\end{proof}
\begin{thm}
\label{thm:Extreme}Let $e$ be an extreme point of $\E Y$. Then
there is a unique measurable selection $\mathbf{e}(\cdot)$ with $e=\int\mathbf{e}\mathrm{d}P$
and further, $\mathbf{e}(\omega)$ is an extreme point of $Y(\omega)$
$P$\nobreakdash-almost everywhere.
\end{thm}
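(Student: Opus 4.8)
The plan is to mimic the structure of the proof of Theorem~\ref{thm:6}, replacing the exposing functional by the defining property of an extreme point and using a measurable-selection splitting argument to derive a contradiction from non-uniqueness. First I would fix an extreme point $e\in\E Y$ and, by~(\ref{eq:Expectation}), write $e=\int\mathbf{e}_1\mathrm{d}P$ for some measurable selection $\mathbf{e}_1(\cdot)\in Y(\cdot)$. The key observation is the following: if $\mathbf{e}_1(\omega)$ fails to be extreme in $Y(\omega)$ on a set of positive measure, or if there is a second measurable selection $\mathbf{e}_2\neq\mathbf{e}_1$ on a positive-measure set with $\int\mathbf{e}_2\mathrm{d}P=e$, then I can build two \emph{distinct} selections $\mathbf{k}_1,\mathbf{k}_2$ with $\mathbf{e}_1=\tfrac12(\mathbf{k}_1+\mathbf{k}_2)$ pointwise, whence $e=\tfrac12\bigl(\int\mathbf{k}_1\mathrm{d}P+\int\mathbf{k}_2\mathrm{d}P\bigr)$ with $\int\mathbf{k}_1\mathrm{d}P\neq\int\mathbf{k}_2\mathrm{d}P$, contradicting that $e$ is extreme in $\E Y$.

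For the uniqueness part, suppose $e=\int\mathbf{e}_1\mathrm{d}P=\int\mathbf{e}_2\mathrm{d}P$ with $P(\mathbf{e}_1\neq\mathbf{e}_2)>0$. Exactly as in Theorem~\ref{thm:6}, pick a linear functional $\ell$ with $P\bigl(\ell(\mathbf{e}_1)\neq\ell(\mathbf{e}_2)\bigr)>0$, and form the selections $\tilde{\mathbf{e}}_1,\tilde{\mathbf{e}}_2$ that sort $\mathbf{e}_1,\mathbf{e}_2$ according to the sign of $\ell(\mathbf{e}_1)-\ell(\mathbf{e}_2)$. Then $\tfrac12(\tilde{\mathbf{e}}_1+\tilde{\mathbf{e}}_2)=\tfrac12(\mathbf{e}_1+\mathbf{e}_2)$ pointwise, so its integral is $e$ again, yet $\int\ell(\tilde{\mathbf{e}}_1)\mathrm{d}P<\int\ell(\tilde{\mathbf{e}}_2)\mathrm{d}P$ forces $e_1:=\int\tilde{\mathbf{e}}_1\mathrm{d}P\neq\int\tilde{\mathbf{e}}_2\mathrm{d}P=:e_2$ while $e=\tfrac12(e_1+e_2)$, contradicting extremality of $e$. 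Hence the representing selection is $P$-a.e.\ unique; note this reuses the machinery of Theorem~\ref{thm:6} essentially verbatim, only with "exposes one point" replaced by "is extreme".

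For the pointwise-extremality part, suppose $\mathbf{e}(\cdot)$ is the (now unique) representing selection but that on a set $A$ with $P(A)>0$ the value $\mathbf{e}(\omega)$ is not extreme in $Y(\omega)$; that is, $\mathbf{e}(\omega)=\tfrac12(y_1(\omega)+y_2(\omega))$ with $y_1(\omega),y_2(\omega)\in Y(\omega)$ and $y_1(\omega)\neq y_2(\omega)$. The main obstacle is \emph{measurability}: I need $y_1,y_2$ to be chosen as measurable selections of $Y$, and I need the set $A$ itself to be measurable and to carry $y_1\neq y_2$ in a quantitatively controlled way (e.g.\ $\|y_1-y_2\|>\varepsilon$ on a positive-measure subset). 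This is handled by the Kuratowski--Ryll-Nardzewski selection theorem applied to the measurable multifunction $\omega\rightrightarrows\{(z_1,z_2)\in Y(\omega)\times Y(\omega):\ z_1+z_2=2\mathbf{e}(\omega)\}$ (cf.\ the citation to \citet{Bogachev2007} already used in the proof of Proposition~\ref{thm:4}), restricting to the measurable set where this multifunction contains a pair with $z_1\neq z_2$, and then passing to $A\subseteq\{\|y_1-y_2\|>\varepsilon\}$ with $P(A)>0$. Outside $A$ set $\mathbf{k}_i:=\mathbf{e}$; on $A$ set $\mathbf{k}_1:=y_1$, $\mathbf{k}_2:=y_2$. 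Then $\mathbf{k}_1,\mathbf{k}_2$ are measurable selections of $Y$, $\tfrac12(\mathbf{k}_1+\mathbf{k}_2)=\mathbf{e}$ everywhere, so $\tfrac12\bigl(\int\mathbf{k}_1\mathrm{d}P+\int\mathbf{k}_2\mathrm{d}P\bigr)=e$, while $\int(\mathbf{k}_1-\mathbf{k}_2)\mathrm{d}P=\int_A(y_1-y_2)\mathrm{d}P$; by the uniqueness already established (or directly by extremality of $e$) this integral must vanish, but that contradicts $\|y_1-y_2\|>\varepsilon$ on $A$ once we first replace $\ell$-sorting as above to make the two integrals genuinely differ — concretely, apply the same sign-sorting trick with a functional $\ell$ separating $\int_A y_1$ from $\int_A y_2$. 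This yields two distinct points of $\E Y$ whose midpoint is $e$, the desired contradiction, completing the proof.
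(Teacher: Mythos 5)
Your argument is correct, but it takes a genuinely different route from the paper's. The paper proves Theorem~\ref{thm:Extreme} by a dimension-reduction induction: starting from a supporting functional $f_{d}$ at $e$ obtained from Hahn--Banach, it repeatedly applies Proposition~\ref{thm:4} and Lemma~\ref{thm:SupArgMax} to pass to nested random faces $Y_{i-1}(\omega)=\partial s_{Y_{i}(\omega)}(f_{i})$, each contained in an affine subspace of one higher co-dimension, until $Y_{1}(\omega)$ is an interval; there the extreme point $e$ of $\E Y_{1}$ is an endpoint with an obviously unique representing selection, and uniqueness and extremality are propagated backwards through the chain $Y_{1}\subset Y_{2}\subset\dots\subset Y_{d}=Y$. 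You instead work directly from the definition of an extreme point: any failure of uniqueness or of pointwise extremality is converted, via the $\ell$-sorting device lifted essentially verbatim from the proof of Theorem~\ref{thm:6}, into two measurable selections whose integrals are two \emph{distinct} points of $\E Y$ averaging to $e$ --- a contradiction. Your approach is more elementary and self-contained (it needs neither Proposition~\ref{thm:4} nor the induction on co-dimension, only the Kuratowski--Ryll-Nardzewski theorem applied to the pair multifunction $\omega\rightrightarrows\left\{ (z_{1},z_{2})\in Y(\omega)^{2}\colon z_{1}+z_{2}=2\mathbf{e}(\omega),\ \left\Vert z_{1}-z_{2}\right\Vert \ge\varepsilon\right\} $, whose measurability you correctly flag as the one delicate point), whereas the paper's approach buys a constructive description of $\mathbf{e}$ as an iterated $\argmax$ selection. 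One small imprecision at the end of your write-up: when $\int_{A}y_{1}\,\mathrm{d}P=\int_{A}y_{2}\,\mathrm{d}P$ there is no functional ``separating'' these two integrals; what you want (and what you in fact describe earlier) is a functional $\ell$ with $P\bigl(\ell(y_{1})\neq\ell(y_{2})\bigr)>0$ --- which exists, e.g.\ a coordinate functional, since $y_{1}\neq y_{2}$ on $A$ --- followed by the pointwise sorting, after which the integrals of the sorted selections genuinely differ while their sum is still $2\mathbf{e}$ pointwise. With that phrasing fixed the proof is complete; note that this sorting step also makes the separately established uniqueness superfluous for the extremality part.
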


\begin{proof}
We notice first that $k_{1}=k_{2}$ in Definition~\ref{def:Strict}~\ref{enu:extreme}
is equivalent to $f_{i}(k_{1})=f_{i}(k_{2})$, where $f_{i}$, $i=1,\dots,d$
are linearly independent functionals.

As $e$ is contained in the boundary, $e\in\partial\E Y$, the Hahn\textendash Banach
theorem provides a linear functional $f_{d}$ so that $f_{d}(e)\ge f_{d}(y)$
for all $y\in\E Y$. Then, by Proposition~\ref{thm:4}, we have that
\begin{equation}
e\in\partial s_{\E Y}(f_{d})=\E\partial s_{Y}(f_{d}).\label{eq:7-1}
\end{equation}
It follows from Lemma~\ref{thm:SupArgMax} that $Y_{d-1}(\omega):=\partial s_{Y(\omega)}(f_{d})=\argmax_{y\in Y(\omega)}f_{d}(y)$
is contained in an affine subspace of co-dimension~$1$ parallel
to $\left\{ f_{d}(\cdot)=0\right\} $ for each $\omega\in\Omega$,
as $f_{d}$ is linear and $Y_{d-1}\subset\left\{ f_{d}(\cdot)=const\right\} $
for some constant.

From~(\ref{eq:7-1}) we deduce that $e\in\E Y_{d-1}$ and $e$, by
linearity, is an extreme point of the set $\argmax_{y\in\E Y}f_{d}(y)$,
which is contained in an affine subspace, which is of co-dimension~$1$
and parallel to $\left\{ f_{d}(\cdot)=0\right\} $ as well. 

We argue now by induction on the dimension. To this end set $Y_{d}:=Y$
and assume that $Y_{i}$ is contained in an affine subspace of co-dimension
$d-i$ so that $f_{j}(y)=f_{j}(y^{\prime})$ for all $y$, $y^{\prime}\in Y_{i}$
and $j>i$. Then we may repeat the previous argument and find a linear
functional $f_{i}$ separating $e$ and $\bigcap_{j>i}\argmax_{y\in\E Y}f_{j}(y)$.
The linear functions $f_{i}$ may be chosen linearly independent from
$f_{j}$, $j>i$, as $\bigcap_{j>i}\argmax_{y\in\E Y}f_{j}(y)$ is
contained in an affine subspace of co-dimension $d-i$.

Define recursively the random sets 
\[
Y_{i-1}(\omega):=\partial s_{Y_{i}(\omega)}(f_{i})\subset Y_{i}(\omega),
\]
which are contained in an affine hyperplane of co-dimension $d-(i-1)$
parallel to $\left\{ f_{j}(\cdot)=0\colon j=i,\dots,d\right\} $.

It follows that $Y_{1}(\omega)$ is an interval and the random variable
$\omega\mapsto Y_{1}(\omega)$, by construction, is measurable. Hence
$e\in\E Y_{1}=\int\mathbf{e}\mathrm{d}P$ and $\mathbf{e}\in Y_{1}$
is unique, as $e$ is an extreme point in the \emph{interval} 
\[
\bigcap_{i=1}^{d}\argmax_{y\in\E Y}f_{i}(y).
\]
Clearly, $\mathbf{e}\in Y_{1}\subset Y_{2}$ and $\mathbf{e}$ is
unique in $Y_{2}$ as well, as otherwise in conflict with maximality
with respect to $f_{2}$. This argument can be repeated (in a backwards
recursive way) to see that $\mathbf{e}\in Y$ is unique almost everywhere.
\end{proof}

\section{\label{sec:LLN}The law of large numbers and the central limit theorem}

To study the law of large numbers we consider a sequence of independent,
set-valued random variables $Y_{i}$ with identical distribution (i.i.d.).
We are interested in which sense the sample means $\frac{1}{N}\sum_{i=1}^{N}Y_{i}(\omega)$
converge to the expected value set $\E Y$. 

We start with general observations regarding the sample mean process. 

\subsection{\label{subsec:Convexification}Convexification}

As was discussed in Section~\ref{sec:Expectation}, the expected
value $\E Y$ is convex in many, but not all situations. However,
the sample means $\frac{1}{N}\sum_{i=1}^{N}Y_{i}$ always converge
to a convex set in the sense of the next lemma.
\begin{lem}[\citet{Artstein1985}]
Let $(K_{i})_{i=1}^{\infty}$ be a sequence of compact sets in a
Banach space $X$ such that 
\[
\frac{1}{N}\left(\conv K_{1}+\conv K_{2}+\dots+\conv K_{N}\right)\xrightarrow[N\to\infty]{}K_{0}
\]
 in Pompeiu\textendash Hausdorff distance for some convex and compact
set $K_{0}$. Then 
\[
\frac{1}{N}\left(K_{1}+K_{2}+\dots+K_{N}\right)\xrightarrow[N\to\infty]{}K_{0}.
\]
\end{lem}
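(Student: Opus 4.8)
The plan is to exploit the Shapley--Folkman lemma, which is exactly the quantitative tool that measures how far a Minkowski sum of non-convex sets is from its convex hull. Recall the statement: if $A_1,\dots,A_N$ are subsets of $\mathbb{R}^d$, then every point of $\conv(A_1+\dots+A_N)$ lies in $A_{i_1}+\dots+A_{i_k}+\conv A_{j_1}+\dots+\conv A_{j_{N-k}}$ with at most $k\le d$ of the summands ``un-convexified''. Equivalently, with $r(A):=\mathbb{H}(A,\conv A)$ (the inner radius / non-convexity measure) and $\rho(A):=\sup_{a\in A}\|a\|$, one has the estimate
\[
\mathbb{H}\Bigl(\tfrac1N\textstyle\sum_{i=1}^N K_i,\ \tfrac1N\textstyle\sum_{i=1}^N\conv K_i\Bigr)\le\frac{1}{N}\min\Bigl(\textstyle\sum_{\text{$d$ largest}}r(K_i),\ \text{something}\Bigr),
\]
and more crudely $\mathbb{H}\bigl(\tfrac1N\sum K_i,\tfrac1N\sum\conv K_i\bigr)\le\frac{\sqrt d}{N}\max_{i\le N}\rho(K_i)$, since each $r(K_i)\le 2\rho(K_i)\le \operatorname{diam}(K_i)$ and only $d$ of them survive after dividing by $N$.

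First I would record the relevant Shapley--Folkman estimate as a sublemma (or cite \citet{Starr1969} / \citet{Cassels1975}), phrased directly in Pompeiu--Hausdorff distance: $\mathbb{H}\bigl(\sum_{i=1}^N K_i,\ \sum_{i=1}^N\conv K_i\bigr)\le \sqrt d\cdot\max_{1\le i\le N}\operatorname{diam}(K_i)$, valid for any finite family of nonempty compact sets. The point is that the ``error'' is controlled by the $d$ largest individual diameters, not by a sum over all $N$ sets. Second, I would bound $\max_{i\le N}\operatorname{diam}(K_i)$: from the hypothesis the averages $\tfrac1N\sum_{i=1}^N\conv K_i$ converge in $\mathbb{H}$, hence form a Cauchy, and in particular bounded, sequence; writing $\tfrac1N\sum_{i=1}^N\conv K_i - \tfrac{1}{N-1}\sum_{i=1}^{N-1}\conv K_i$ isolates $\conv K_N$ up to a $\tfrac1{N}$ factor and shows $\operatorname{diam}(\conv K_N)=\operatorname{diam}(K_N)=o(N)$; actually boundedness of the averages already forces $\rho(K_N)=o(N)$, which is all we need. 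Then $\tfrac{\sqrt d}{N}\max_{i\le N}\operatorname{diam}(K_i)$ need not obviously go to zero from $o(N)$ alone, so here I would argue more carefully: split the max over $i\le \lfloor N/2\rfloor$ and $\lfloor N/2\rfloor< i\le N$; for the first block divide by $N$ and use that finitely-many-at-a-time the diameters are $o(i)=o(N)$, for the second block note these indices also appear with the $\tfrac1N$ factor and their diameters are $o(N)$ by the same telescoping argument. Combining, $\tfrac{1}{N}\max_{i\le N}\operatorname{diam}(K_i)\to 0$.

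Finally I would assemble the triangle inequality (Lemma of \citet{Castaing1977} quoted above):
\[
\mathbb{H}\Bigl(\tfrac1N\textstyle\sum_{i=1}^N K_i,\ K_0\Bigr)\le \mathbb{H}\Bigl(\tfrac1N\textstyle\sum_{i=1}^N K_i,\ \tfrac1N\textstyle\sum_{i=1}^N\conv K_i\Bigr)+\mathbb{H}\Bigl(\tfrac1N\textstyle\sum_{i=1}^N\conv K_i,\ K_0\Bigr).
\]
The second term tends to $0$ by hypothesis; the first tends to $0$ by the two steps above. Hence $\tfrac1N(K_1+\dots+K_N)\to K_0$ in Pompeiu--Hausdorff distance, which is the claim. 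The main obstacle I anticipate is the second step: turning the qualitative convergence hypothesis into the quantitative statement $\tfrac1N\max_{i\le N}\operatorname{diam}(K_i)\to0$. A clean way is to observe that a convergent sequence $s_N$ of sets is bounded, say all contained in a ball of radius $R$; then $N s_N$ and $(N-1)s_{N-1}$ both have diameter $\le 2RN$, and $\conv K_N\subseteq N s_N + (N-1)(-s_{N-1})$ (Minkowski), giving $\operatorname{diam}(K_N)\le 2(2N-1)R$, i.e. $O(N)$ — and then the block-splitting above upgrades $O(N)$ for each fixed index to $o(N)$ for the running maximum after division by $N$. One should double-check that $\rho(\conv K)=\rho(K)$ and $\operatorname{diam}(\conv K)=\operatorname{diam}(K)$, which is standard, and that the Shapley--Folkman bound is being applied with the norm $\|\cdot\|$ of $X$ rather than the Euclidean norm, picking up at worst an equivalence-of-norms constant that is harmless for a convergence statement.
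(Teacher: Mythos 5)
The paper offers no proof of this lemma---it is quoted from Artstein (1985) and immediately followed by the Shapley--Folkman--Starr remark---so the comparison is against the standard argument rather than an in-paper one. Your route (Shapley--Folkman plus a growth bound on the summands plus the triangle inequality) is exactly that standard finite-dimensional argument, and it does work in $\mathbb{R}^{d}$, which is the only setting the rest of the paper uses. But note that the lemma as stated is for an arbitrary Banach space $X$, and there your key tool is unavailable: the Shapley--Folkman constant $\sqrt{d}$ is intrinsically dimension-dependent, and the ``equivalence-of-norms constant'' you invoke at the end does not exist in infinite dimensions. The cited Artstein--Hansen proof is dimension-free and exploits compactness of the individual $K_{i}$ by a genuinely different mechanism; as written, your proof establishes only the case $X=\mathbb{R}^{d}$.

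The second, more local issue is the step you yourself flag as the main obstacle. Boundedness of the averages $A_{N}:=\frac{1}{N}\sum_{i\le N}\conv K_{i}$ yields only $\bigl\Vert K_{N}\bigr\Vert=O(N)$ (your own bound $2(2N-1)R$ is of this form), and no block-splitting of the maximum can upgrade $O(N)$ to the required $o(N)$: if $\bigl\Vert K_{i}\bigr\Vert=c\,i$ for all $i$, then $\frac{1}{N}\max_{i\le N}\bigl\Vert K_{i}\bigr\Vert=c$ identically, so the claim that ``boundedness of the averages already forces $\rho(K_{N})=o(N)$'' is false. What does work is your telescoping idea, made precise with support functions: since $s_{\conv K_{N}}=N\,s_{A_{N}}-(N-1)\,s_{A_{N-1}}$ on the dual sphere, the identities (\ref{eq:Norm}) and (\ref{eq:Hausdorff}) give $\bigl\Vert K_{N}\bigr\Vert\le N\,\mathbb{H}\left(A_{N},A_{N-1}\right)+\bigl\Vert A_{N-1}\bigr\Vert$, and it is the \emph{Cauchy} property of $(A_{N})$---not mere boundedness---that forces $\bigl\Vert K_{N}\bigr\Vert=o(N)$. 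Only after that does the $\varepsilon$-splitting of the maximum give $\frac{1}{N}\max_{i\le N}\bigl\Vert K_{i}\bigr\Vert\to0$, after which Shapley--Folkman and the triangle inequality finish the finite-dimensional claim. So keep the telescoping/Cauchy step as the load-bearing one, state it via support functions, and either restrict the lemma to $\mathbb{R}^{d}$ or defer the Banach-space case entirely to the cited reference.
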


\begin{rem}[Shapley-Folkman-Starr]
 The theorem by Shapley-Folkman-Starr (cf.\ \citet{Arrow1971} and
also \citet{Artstein1985}) provides an explicit bound for comparing
sums of compacts sets in the space $X=\mathbb{R}^{d}$ with finite
dimension $d$. The theorem states that 
\[
\mathbb{H}\left(\frac{1}{N}(K_{1}+K_{2}+\dots+K_{N}),\,\frac{1}{N}\conv(K_{1}+K_{2}+\dots+K_{N})\right)\le\frac{\sqrt{d}}{N}\max_{i=1,\dots,N}\bigl\Vert K_{i}\bigr\Vert,
\]
where $\bigl\Vert K\bigr\Vert=\max_{k\in K}\bigl\Vert k\bigr\Vert$
(cf.\ also \citet[Section 3.1.1]{Molchanov} and Figure~\ref{fig:ALineB}
again for illustration). 
\end{rem}

It is thus clear that the sample average $\frac{1}{N}\sum_{i=1}^{N}Y_{i}$
has the same limiting behavior as $\frac{1}{N}\sum_{i=1}^{N}\conv Y_{i}$
\textemdash the sample averages thus converge to a convex set, particularly
in the finite dimensional space~$\mathbb{R}^{d}$. For this we shall
specify further and assume the outcomes $Y_{i}(\omega)$ convex and
compact in what follows such that no separate discussion of the discrete
setting~(\ref{eq:discrete}) is necessary. 

\subsection{The set-valued law of large numbers}

By the \citeauthor{Artstein1975} Theorem \citep[p.~880]{Artstein1975},
the i.i.d.\ sample means $\overline{Y}_{N}:=\frac{1}{N}\sum_{i=1}^{N}Y_{i}$
with $\E\left\Vert Y_{i}\right\Vert <\infty$ converge indeed to the
expected value $\E Y$, i.e., 
\begin{equation}
\mathbb{H}\left(\frac{1}{N}\sum_{i=1}^{N}Y_{i},\,\E Y\right)\xrightarrow[N\to\infty]{}0\quad\text{with probability }1.\label{eq:5-2}
\end{equation}
In view of the representation of the Pompeiu\textendash Hausdorff
distance derived in~(\ref{eq:Hausdorff}) this implies in particular
that 
\[
\frac{1}{N}\sum_{i=1}^{N}s_{Y_{i}}\left(x^{*}\right)\xrightarrow[N\to\infty]{}s_{\E Y}\left(x^{*}\right)\quad\text{with probability }1
\]
 for every $x^{*}\in X^{*}$.

Eq.~(\ref{eq:5-2}) is referred to as the set-valued law of large
numbers. Several extensions are know\textcolor{blue}{n} to this fundamental
theorem, we refer the reader to \citet{XuShapiro} for a uniform law
of large numbers.

\section{\label{sec:CLT}The set-valued central limit theorem}

The CLT theorem is available in the Banach space $C\left(S^{d-1}\right)$
(cf.\ \citet{AraujoGine,Proske2003}), that is, there is a centered
Gaussian random variable $G$ on $C\left(S^{d-1}\right)$ such that
\[
\frac{1}{\sqrt{N}}\left(\sum_{i=1}^{N}s_{Y_{i}}-s_{\E Y_{i}}\right)\xrightarrow{\mathcal{D}}G,
\]
where $\xrightarrow{\mathcal{D}}$ indicates convergence in distribution,
i.e.,  $\E f(X_{n})\to\E f(X)$ for every $\mathbb{R}$\nobreakdash-valued
function $f$ which is bounded and continuous.  In full generality:
\begin{thm}[{\citet[Theorem~3]{Weil1982}}]
\label{thm:CLT}Let $\left(Y_{i}\right)_{i=1}$ and $Y$ be i.i.d.\ random
sets with $\E\left\Vert Y\right\Vert ^{2}<\infty$. Then  
\[
\sqrt{N}\cdot\mathbb{H}\left(\frac{1}{N}\sum_{i=1}^{N}Y_{i},\,\E\conv Y\right)\xrightarrow{\mathcal{D}}\left\Vert G\right\Vert _{\infty},
\]
where $G$ is a centered Gaussian $C\left(S^{d-1}\right)$\nobreakdash-valued
random variable.
\end{thm}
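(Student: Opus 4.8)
The statement is essentially a transfer of the Banach-space central limit theorem in $C(S^{d-1})$ to a statement about the Pompeiu--Hausdorff distance, so the plan is to combine the embedding $K \mapsto s_K|_{S^{d-1}}$ with the continuous mapping theorem. First I would recall the isometric isomorphism from $\mathfrak{C}_d$ (convex compact subsets of $\mathbb{R}^d$) onto $C(S^{d-1})$ established in Section~\ref{sec:Preliminaries}, together with the identity~\eqref{eq:Hausdorff}, which gives
\[
\mathbb{H}\Bigl(\tfrac{1}{N}\textstyle\sum_{i=1}^{N}Y_i,\ \E\conv Y\Bigr)
= \bigl\Vert \tfrac{1}{N}\textstyle\sum_{i=1}^{N}s_{Y_i} - s_{\E\conv Y}\bigr\Vert_{\infty}
= \bigl\Vert \tfrac{1}{N}\textstyle\sum_{i=1}^{N}\bigl(s_{Y_i} - s_{\E Y}\bigr)\bigr\Vert_{\infty},
\]
where I use that $s_{Y_i}=s_{\conv Y_i}$ (from the proof of Lemma~\ref{thm:SupArgMax}), that $\E s_{Y_i} = s_{\E\conv Y}$ as elements of $C(S^{d-1})$ (this is exactly the linearity of the embedding under Minkowski addition combined with Proposition~\ref{thm:4} / the commutation of $\E$ with support functions), and that the $Y_i$ are identically distributed so $s_{\E Y_i}=s_{\E Y}$ for all $i$.

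Next I would set $Z_i := s_{Y_i} - s_{\E Y} \in C(S^{d-1})$. These are i.i.d.\ centered $C(S^{d-1})$-valued random elements, and the moment hypothesis $\E\Vert Y\Vert^2 < \infty$ together with the isometry~\eqref{eq:Norm} gives $\E\Vert Z_i\Vert_\infty^2 < \infty$ (after absorbing the constant shift $s_{\E Y}$, whose sup-norm is finite). Invoking the central limit theorem in $C(S^{d-1})$ cited just before the theorem (\citet{AraujoGine,Proske2003}; this is where the technical work of verifying a Banach-space CLT lives, but it is a quoted result), one obtains
\[
\frac{1}{\sqrt N}\sum_{i=1}^{N} Z_i \xrightarrow{\mathcal D} G
\]
for a centered Gaussian $C(S^{d-1})$-valued random variable $G$. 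Then I would apply the continuous mapping theorem to the map $\Phi\colon C(S^{d-1})\to\mathbb{R}$, $\Phi(g)=\Vert g\Vert_\infty$, which is continuous (indeed $1$-Lipschitz) on $C(S^{d-1})$; this yields
\[
\sqrt N\cdot \mathbb{H}\Bigl(\tfrac1N\textstyle\sum_{i=1}^N Y_i,\ \E\conv Y\Bigr)
= \Bigl\Vert \tfrac{1}{\sqrt N}\textstyle\sum_{i=1}^N Z_i\Bigr\Vert_\infty
\xrightarrow{\mathcal D} \Vert G\Vert_\infty,
\]
which is the claim.

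\textbf{Anticipated obstacles.} The genuinely hard analytic content — that i.i.d.\ square-integrable centered elements of $C(S^{d-1})$ obey a CLT with a Gaussian limit — is not proved here but imported from \citet{AraujoGine,Proske2003}; a Banach space of type~2, or at least the fact that $C(K)$ on a compact metric space admits a CLT under a suitable second-moment (or Gaussian-covariance) condition, is the relevant ingredient, and one should be careful that the hypothesis $\E\Vert Y\Vert^2<\infty$ is exactly what makes $Z$ pre-Gaussian / CLT in $C(S^{d-1})$ in the references' sense. The only other point needing care is the passage from the sample mean of \emph{sets} $\frac1N\sum Y_i$ (which need not be convex) to the sample mean of support functions: here one uses that $s_K = s_{\conv K}$, so the embedding does not see the non-convexity, and one should note explicitly that by the Artstein--Vitale convexification principle recalled above (the \citet{Artstein1985} lemma and the Shapley--Folkman--Starr bound) this is harmless — indeed $\mathbb{H}(\frac1N\sum Y_i,\frac1N\conv\sum Y_i)=O(\sqrt d/N)=o(1/\sqrt N)$, so it does not even affect the $\sqrt N$-scaled limit. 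With these remarks in place the proof is a short chain of equalities and one application each of the quoted CLT and of the continuous mapping theorem.
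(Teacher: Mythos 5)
Your proposal is correct and takes essentially the same route as the paper, which likewise does not prove the Banach-space CLT but merely cites Weil and Casey and notes that the argument reduces, via metric entropy and bracketing numbers on $S^{d-1}$, to the general theory of Araujo--Gin\'e; you simply spell out the support-function embedding, the centering identity $\E s_{Y}=s_{\E\conv Y}$, and the continuous-mapping step that the paper leaves implicit. Two small corrections: the Shapley--Folkman--Starr bound is $\frac{\sqrt d}{N}\max_{i\le N}\left\Vert Y_{i}\right\Vert$, not $O(\sqrt d/N)$ --- the random factor $\max_{i\le N}\left\Vert Y_{i}\right\Vert$ is unbounded in general, and you need $\E\left\Vert Y\right\Vert^{2}<\infty$ together with Borel--Cantelli to get $\max_{i\le N}\left\Vert Y_{i}\right\Vert=o(\sqrt N)$ a.s.\ and hence the claimed $o(1/\sqrt N)$ for the convexification error. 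Also, $C(S^{d-1})$ is \emph{not} of type~2, so the CLT there cannot come from type considerations; the correct mechanism --- and the one the paper's citation points to --- is that support functions are Lipschitz on $S^{d-1}$ with Lipschitz constant $\left\Vert Y\right\Vert\in L^{2}$, which supplies the bracketing entropy condition required by the quoted Banach-space CLT.
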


\begin{proof}
Cf.\ \citet[Theorem~8]{Weil1982} or \citet{Casey2000}. The proof
is based on computing the metric entropy of $S^{d-1}$ and the respective
bracketing numbers, it reduces the particular situation here to the
general situation described in \citet{AraujoGine}. Elements of the
general theory and proofs can be found in \citet{Vaart1996}.
\end{proof}
The Gaussian measure $G$ in Theorem~\ref{thm:CLT} is provided by
the isometry of convex and compact sets with their respective support
function. Moreover $\mathbb{H}\left(\cdot,\cdot\right)\in\mathbb{R}_{\ge0}$
always is just a positive number (as is $\left\Vert G\right\Vert _{\infty}$),
the statement just considers the $\mathbb{R}_{\ge0}$\nobreakdash-valued
random process $\sqrt{N}\cdot\mathbb{H}\left(\frac{1}{N}\sum_{i=1}^{N}Y_{i},\,\E\conv Y\right)$
and does not reveal anything of the local convergence properties of
the sample mean to the expected value. 

\bigskip{}

In view of the latter statements, the preceding discussion and~(\ref{eq:HausdorffBoundary}),
the interesting properties are to be expected on the boundary $\partial\E Y$.
In what follows we shall distinguish and consider three particular
situations on the boundary of $\E Y$, which can be considered to
be extremal situations. We discuss the CLT for \emph{exposed points},
for \emph{tangent planes} and \emph{facets} of $\E Y$ in the following
subsections separately. 

\subsection{The CLT for exposed points}

The following theorem ensures that for any exposed point $k\in\exp\E Y$
there is a particular selection $y_{N}\in\overline{Y}_{N}:=\frac{1}{N}\sum_{i=1}^{N}Y_{i}$
from the sample means, such that the process $\sqrt{N}\left(y_{N}-k\right)$
converges to a Gaussian random variable. 
\begin{thm}
\label{thm:15}Suppose that the envelope function $h$ (cf.~(\ref{eq:Envelope}))
satisfies $h\in L^{2}$. 

Let $k\in\exp\E Y$ be exposed by the functional $f\in X^{*}$ and
$y_{N}\in\overline{Y}_{N}$ be exposed by the same $f$. Then there
is a unique measurable selection $\mathbf{k}(\cdot)\in Y(\cdot)$
such that 
\[
\sqrt{N}\left(y_{N}-k\right)\xrightarrow{\mathcal{D}}\mathcal{N}_{d}\left(0,\Sigma\right)\quad\text{as }N\to\infty,
\]
where $k=\E\mathbf{k}$ and $\Sigma$ is the covariance matrix 
\[
\Sigma:=\E\left(\mathbf{k}-k\right)\left(\mathbf{k}-k\right)^{\top}.
\]
\end{thm}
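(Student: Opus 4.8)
The plan is to reduce the set-valued statement to a classical multivariate central limit theorem for the i.i.d.\ sequence of selections $\mathbf k(Y_i)$, using Theorem~\ref{thm:6} to produce the canonical selection and Lemma~\ref{Lem:argmax} (in the form~(\ref{eq:argmaxSn})) to identify the exposed point of the sample mean with the average of those selections. First I would invoke Theorem~\ref{thm:6}: since $k=\E\mathbf k$ is exposed in $\E Y$ by $f$, the functional $f$ exposes a single point of $Y(\omega)$ for $P$-almost every $\omega$, and there is a $P$-a.e.\ unique measurable selection $\mathbf k$ with $\mathbf k(\omega)=\argmax_{y\in Y(\omega)}f(y)$ and $k=\int\mathbf k\,\mathrm dP$. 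Set $\mathbf k_i:=\mathbf k(Y_i)$; these are i.i.d.\ $\mathbb R^d$-valued random vectors, and the envelope bound $h\in L^2$ together with $\|\mathbf k_i(\omega)\|\le\|Y_i(\omega)\|\le h(\omega)$ gives $\E\|\mathbf k_i\|^2<\infty$, so the covariance matrix $\Sigma=\E(\mathbf k-k)(\mathbf k-k)^{\top}$ is well-defined and finite.

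The second step is the set-valued bookkeeping. By~(\ref{eq:argmaxSn}) applied pathwise with $K_i=Y_i(\omega)$,
\[
\argmax_{y\in\overline Y_N(\omega)}f(y)=\frac1N\sum_{i=1}^N\argmax_{y\in Y_i(\omega)}f(y).
\]
For almost every $\omega$ and all $i$ the right-hand summands are singletons $\{\mathbf k_i(\omega)\}$ (the same functional $f$ exposes a unique point of $Y_i(\omega)$ a.s., by Theorem~\ref{thm:6} applied to each $Y_i$, which is distributed as $Y$), so the $\argmax$ on the left is the single point $\frac1N\sum_{i=1}^N\mathbf k_i(\omega)$. Since $y_N\in\overline Y_N$ is by hypothesis exposed by $f$, this forces $y_N=\frac1N\sum_{i=1}^N\mathbf k_i$ almost surely. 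Hence
\[
\sqrt N\,(y_N-k)=\sqrt N\left(\frac1N\sum_{i=1}^N\mathbf k_i-\E\mathbf k\right),
\]
and the classical multivariate CLT for i.i.d.\ square-integrable vectors yields $\sqrt N(y_N-k)\xrightarrow{\mathcal D}\mathcal N_d(0,\Sigma)$.

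The main obstacle — and the point that needs care rather than a one-line invocation — is the joint measurability and the "exposed by the same $f$" coupling: one must check that $y_N$ is genuinely well-defined (i.e.\ that $f$ really does expose a point of $\overline Y_N(\omega)$ for a.e.\ $\omega$, which follows because $\E\overline Y_N=\E Y$ has $k$ exposed by $f$, or alternatively because the finite Minkowski average of a.s.-smooth-in-direction-$f$ sets is again exposed in direction $f$ by~(\ref{eq:argmaxSn})), and that the selection $\omega\mapsto\mathbf k(Y_i(\omega))$ is measurable, which is exactly the Kuratowski--Ryll-Nardzewski argument already used in the proof of Proposition~\ref{thm:4}. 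A secondary subtlety is that the exposed point $k$ of $\E Y$ a priori only guarantees $f$ exposes points of $Y(\omega)$ a.e., but not necessarily for \emph{every} $\omega$; this is harmless since a null set does not affect the empirical average, but it should be stated explicitly. Once these measurability and well-definedness points are pinned down, the remainder is the standard finite-dimensional CLT.
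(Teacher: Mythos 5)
Your proposal is correct and follows essentially the same route as the paper's proof: Theorem~\ref{thm:6} for the unique selection, Lemma~\ref{Lem:argmax} (via~(\ref{eq:argmaxSn})) to identify the exposed point of $\overline{Y}_N$ with $\frac{1}{N}\sum_{i=1}^{N}\mathbf{k}_i$, and the classical multivariate CLT. If anything you are slightly more careful than the paper, which only asserts that $y_N$ and $\overline{\mathbf{k}}_N$ have the same distribution, whereas you establish the pathwise identity $y_N=\overline{\mathbf{k}}_N$ almost surely.
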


\begin{proof}
There is a measurable selection $\mathbf{k}$ such that $k=\int\mathbf{k}\mathrm{d}P$.
By Theorem \ref{thm:6} the selection $\mathbf{k}$, as $k$ is exposed,
is unique and $\mathbf{k}(\omega)\in\partial s_{Y(\omega)}\left(f\right)\subseteq Y(\omega)$.
$\mathbf{k}$ is a random variable with expectation $k$, and as $\left\Vert \mathbf{k}\right\Vert \le\left\Vert Y\right\Vert \le h\in L^{2}$
the covariance matrix 
\[
\Sigma:=\var\,\mathbf{k}=\E\left(\mathbf{k}-k\right)\left(\mathbf{k}-k\right)^{\top}
\]
 exists. 

It follows from Lemma~\ref{Lem:argmax} and Theorem~\ref{thm:6}
that $\mathbf{k}(\omega)$ and $\mathbf{k}_{i}(\omega)$, where 
\begin{equation}
\mathbf{k}_{i}(\omega)\in\partial s_{Y_{i}(\omega)}\left(f\right),\label{eq:ki}
\end{equation}
 have the same distribution for all $i$. Hence $y_{N}\in\partial s_{\bar{Y}_{N}}\left(f\right)\subseteq\overline{Y}_{N}$
and $\overline{\mathbf{k}}_{N}:=\frac{1}{N}\sum_{i=1}^{N}\mathbf{k}_{i}$
have the same distribution as well.

By the central limit theorem (cf.\ \citet{vdVaart}) thus 
\begin{equation}
\frac{1}{\sqrt{N}}\sum_{i=1}^{N}\left(\mathbf{k}_{i}-k\right)\xrightarrow{\mathcal{D}}\mathcal{N}_{d}\left(0,\Sigma\right)\quad(N\to\infty),\label{eq:CLT1-1}
\end{equation}
which is in turn 
\[
\sqrt{N}\left(y_{N}-k\right)\xrightarrow{\mathcal{D}}\mathcal{N}_{d}\left(0,\Sigma\right)
\]
because 
\[
\E\mathbf{k}_{i}=k\text{ and }\var\,\mathbf{k}_{i}=\E\left(\mathbf{k}_{i}-k\right)\left(\mathbf{k}_{i}-k\right)^{\top}=\Sigma.
\]
\end{proof}
In the proof of Theorem~\ref{thm:15} it is essential to find a measurable
selection $\mathbf{k}_{i}\in Y_{i}$ having the same distribution
as $\mathbf{k}$, which is possible by means of (\ref{eq:ki}). Similar
choices are possible in some other situations, for example again in
the binomial setting (\ref{eq:discrete}) as in Section~\ref{sec:Expectation}: 
\begin{cor}
Let $Y_{i}$ be as in Section~\ref{sec:Expectation} with the additional
assumption that $K_{j}\neq K_{j^{\prime}}$ ($j\neq j^{\prime}$).
Then, for any selection $\mathbf{k}\in Y$ with $k=\E\mathbf{k}\in\E Y$
there are selections $\mathbf{k}_{i}\in Y_{i}$ with the same distribution
as $\mathbf{k}$ such that 
\begin{equation}
\sqrt{N}\sum_{i=1}^{N}\left(\mathbf{k}_{i}-k\right)\xrightarrow{\mathcal{D}}\mathcal{N}_{d}\left(0,\var\,\mathbf{k}\right).\label{eq:conv}
\end{equation}
\end{cor}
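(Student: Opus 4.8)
The plan is to reduce the corollary to Theorem~\ref{thm:15} by constructing, for a given (not necessarily exposed) selection $\mathbf{k}\in Y$, an exposing-type functional that picks out $\mathbf{k}$ among the finitely many outcomes $K_j$. First I would use the discreteness: since $Y$ takes only the values $K_1,\dots,K_J$, a selection $\mathbf{k}$ is determined by the choice of a point $k_j\in K_j$ on each atom $\{Y=K_j\}$, and $k=\E\mathbf{k}=\sum_j p_j k_j$. The assumption $K_j\neq K_{j'}$ is what lets us separate the atoms geometrically.

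The key step is to produce linear functionals $f_j$ such that $f_j$ exposes $k_j$ in $K_j$; such $f_j$ exist only if $k_j$ is an exposed point of $K_j$, which need not hold in general, so the honest route is different. Instead I would argue directly: on the atom $\{Y=K_j\}$ define $\mathbf{k}_i(\omega):=k_j$ whenever $Y_i(\omega)=K_j$. By the i.i.d.\ assumption and since the $K_j$ are pairwise distinct, the map $\omega\mapsto(\text{which }K_j\text{ occurs for }Y_i)$ is measurable and has the same distribution for every $i$, so $\mathbf{k}_i$ is a measurable selection of $Y_i$ with the same distribution as $\mathbf{k}$. In particular $\E\mathbf{k}_i=k$ and $\var\mathbf{k}_i=\var\mathbf{k}$, and $\|\mathbf{k}_i\|\le\max_j\|K_j\|<\infty$ so the covariance is finite. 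Then $\frac1N\sum_{i=1}^N\mathbf{k}_i$ is a selection of $\overline Y_N$ by construction, and the classical multivariate central limit theorem applied to the i.i.d.\ sequence $(\mathbf{k}_i)_i$ yields~\eqref{eq:conv}.

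The only genuine subtlety — and the main obstacle — is that unlike in Theorem~\ref{thm:15}, where the exposing functional $f$ automatically forces the selections $\mathbf{k}_i$ to be consistent across $i$, here one must verify by hand that the pointwise prescription $\mathbf{k}_i=k_j$ on $\{Y_i=K_j\}$ does define a bona fide measurable selection and that $\frac1N\sum_i\mathbf{k}_i\in\overline Y_N$; both follow because Minkowski averaging commutes with the $K_j$-by-$K_j$ decomposition, i.e.\ $\frac1N\sum_i\mathbf{k}_i(\omega)\in\frac1N\sum_i K_{j(i,\omega)}(\omega)=\overline Y_N(\omega)$. Once this bookkeeping is in place, the statement is immediate from Lindeberg--L\'evy, exactly as in the last lines of the proof of Theorem~\ref{thm:15}; no entropy or bracketing argument is needed because the discrete structure makes the relevant selections explicit.
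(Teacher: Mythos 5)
Your construction is exactly the paper's: define $\mathbf{k}_i(\omega):=k_j$ whenever $Y_i(\omega)=K_j$ (well-defined because the $K_j$ are pairwise distinct), observe that $\mathbf{k}_i$ is then a measurable selection of $Y_i$ equidistributed with $\mathbf{k}$, hence with the same mean $k$ and variance, and conclude by the classical multivariate central limit theorem. The opening detour about exposing functionals is unnecessary but harmless, and the rest matches the paper's argument essentially verbatim.
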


\begin{proof}
Let 
\[
k_{j}:=\mathbf{k}(\omega)\in K_{j}\mbox{ if }Y(\omega)=K_{j}\quad(j=1,2,\dots J)
\]
and define
\[
\mathbf{k}_{i}(\omega):=k_{j}\quad\mbox{if }Y_{i}(\omega)=K_{j}\quad(i=1,2,3\dots),
\]
such that $\mathbf{k}_{i}\in\left\{ k_{1},\dots k_{J}\right\} =\text{range}(\mathbf{k})$
and 
\begin{equation}
\mathbf{k}_{i}(\omega)=\mathbf{k}(\omega)\mbox{ if }Y_{i}(\omega)=Y(\omega)\quad(i=1,2,\dots).\label{eq:ki2}
\end{equation}
Then the random variables have the same distribution, as $p_{j}=P(Y=K_{j})=P(Y_{i}=K_{j})=P(\mathbf{k}_{i}=k_{j})=P(\mathbf{k}=k_{j})$.
It follows that $\E\mathbf{k}=\E\mathbf{k}_{i}=\sum_{j}p_{j}k_{j}=k$
and $\var\,\mathbf{k}_{i}=\var\,\mathbf{k}=\E\left(\mathbf{k}-k\right)\left(\mathbf{k}-k\right)^{\top}=\sum_{j}p_{j}\left(k_{j}-k\right)\left(k_{j}-k\right)^{\top},$
from which the rest is immediate.
\end{proof}
\begin{rem}
A point $k\in\E Y$ may have various selections $\mathbf{k}$ with
$k=\E\mathbf{k}\in\E Y$, there are hence various selections $\mathbf{k}_{i}\in Y_{i}$
with possibly different convergence behavior~(\ref{eq:conv}). However,
if $k$ is an exposed point, then the selection $\mathbf{k}$ is unique
by Theorem~\pageref{thm:15} and the selections~(\ref{eq:ki}) and~(\ref{eq:ki2})
coincide.
\end{rem}

\subsection{The CLT along tangent planes}

Any compact and convex $K$ can be given as $K=\bigcap_{x^{*}\in X^{*}}\left\{ x^{*}(\cdot)\le\max_{k\in K}x^{*}(k)\right\} $
and $\left\{ x^{*}(\cdot)=\max_{k\in K}x^{*}(k)\right\} $ is a tangent
plane of co-dimension $1$. While the previous subsection addresses
exposed points for which $K\cap\left\{ x^{*}=\max x^{*}(K)\right\} =\partial s_{K}\left(x^{*}\right)$
is a singleton, we continue in this subsection with the situation
that $\partial s_{K}\left(x^{*}\right)$ is not necessarily a singleton. 

\medskip{}

The law of large numbers does not only hold for the sequence $Y_{i}$,
it applies for subdifferentials as well.
\begin{prop}
\label{cor:19}Let $Y$ and $Y_{i}$ be independent and identically
distributed, compact and convex valued random sets with $L^{2}$\nobreakdash-envelope
(cf.~(\ref{eq:Envelope})). Then 
\begin{equation}
\mathbb{H}\left(\frac{1}{N}\sum_{i=1}^{N}\partial s_{Y_{i}}\left(x^{*}\right),\ \partial_{\E Y}(x^{*})\right)\xrightarrow{N\to\infty}0\qquad\text{with probability }1\label{eq:64}
\end{equation}
for any $x^{*}$, and moreover 
\begin{equation}
\frac{1}{\sqrt{N}}\sum_{i=1}^{N}\bigl(\max_{y\in Y_{i}}x^{*}\left(y\right)-\max_{y\in\E Y}x^{*}\left(y\right)\bigr)\xrightarrow{\mathcal{D}}\mathcal{N}\left(0,\,\sigma^{2}\left(x^{*}\right)\right),\label{eq:maxxs}
\end{equation}
where $\sigma^{2}\left(x^{*}\right)=\var\,\max_{y\in Y}x^{*}(y)$.
\end{prop}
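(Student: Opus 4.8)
The plan is to prove the two assertions separately, treating \eqref{eq:maxxs} as a direct consequence of the classical CLT and \eqref{eq:64} as a consequence of the set-valued law of large numbers \eqref{eq:5-2} combined with Proposition~\ref{thm:4}. For \eqref{eq:maxxs}, first observe that $\max_{y\in Y_i}x^*(y)=s_{Y_i}(x^*)$ is a real-valued random variable, and the $Y_i$ being i.i.d.\ makes the sequence $\bigl(s_{Y_i}(x^*)\bigr)_i$ i.i.d.\ as well. The $L^2$-envelope assumption gives $\lvert s_{Y_i}(x^*)\rvert\le\lVert x^*\rVert_*\cdot\lVert Y_i\rVert\le\lVert x^*\rVert_*\,h\in L^2$, so the variance $\sigma^2(x^*)=\var\,s_Y(x^*)$ is finite. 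By \eqref{eq:11} we have $s_{\E Y}(x^*)=\max_{y\in\E Y}x^*(y)$, and additivity of the support function under Minkowski sums gives $s_{\E Y}(x^*)=\E\,s_Y(x^*)$; hence the centering constant $\max_{y\in\E Y}x^*(y)$ is exactly the common mean $\E\,s_{Y_i}(x^*)$. The ordinary Lindeberg--L\'evy CLT then yields \eqref{eq:maxxs} immediately.

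For \eqref{eq:64}, the idea is to apply the Artstein--Vitale law of large numbers \eqref{eq:5-2} not to $Y_i$ itself but to the random sets $Z_i(\omega):=\partial s_{Y_i(\omega)}(x^*)=\argmax_{y\in Y_i(\omega)}x^*(y)$. These are again i.i.d.\ (being a fixed measurable transformation of the $Y_i$), nonempty, compact and convex by Lemma~\ref{thm:SupArgMax}, and integrably bounded since $\lVert Z_i\rVert\le\lVert Y_i\rVert\le h\in L^2\subseteq L^1$. The set-valued LLN therefore gives
\[
\mathbb{H}\!\left(\frac1N\sum_{i=1}^N\partial s_{Y_i}(x^*),\ \E\,\partial s_Y(x^*)\right)\xrightarrow[N\to\infty]{}0\quad\text{a.s.}
\]
It then remains to identify the limit: by Proposition~\ref{thm:4}, $\E\,\partial s_Y(x^*)=\partial s_{\E Y}(x^*)$, which is the object denoted $\partial_{\E Y}(x^*)$ in \eqref{eq:64}. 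This closes the argument. One should also note that the envelope hypothesis gives automatic convexity of all expectations involved via Aumann's theorem, so no separate convexification step is needed.

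The main obstacle, and the only point requiring care, is verifying that the hypotheses of the Artstein--Vitale LLN genuinely transfer to the transformed sequence $Z_i$: measurability of $\omega\mapsto\partial s_{Y_i(\omega)}(x^*)$ (which is supplied by the discussion around \eqref{eq:selection1}, invoking Aumann's theorem and the Kuratowski--Ryll-Nardzewski selection theorem), and the integrable-boundedness bound $\lVert Z_i(\cdot)\rVert\le h(\cdot)$. Both are routine given the results already established, so the proof is essentially a matter of assembling Lemma~\ref{thm:SupArgMax}, Proposition~\ref{thm:4}, the scalar CLT, and \eqref{eq:5-2} in the right order; I would present it in exactly that sequence, doing the scalar statement \eqref{eq:maxxs} first since it is the shorter of the two.
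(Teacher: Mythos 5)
Your proposal is correct and follows essentially the same route as the paper: apply the set-valued law of large numbers to the transformed sequence $\partial s_{Y_i}(x^*)$ and identify the limit via Proposition~\ref{thm:4}, then treat $\max_{y\in Y_i}x^*(y)$ as a square-integrable real-valued i.i.d.\ sequence whose mean is $\max_{y\in\E Y}x^*(y)$ and invoke the classical CLT. The only cosmetic difference is that you justify the centering constant by additivity of support functions while the paper passes $x^*$ through $\E\partial s_Y(x^*)=\partial s_{\E Y}(x^*)$; both rest on the same Proposition~\ref{thm:4}.
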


\begin{proof}
Notice first that $\partial s_{Y}\left(x^{*}\right)\subseteq Y$,
and the law of large numbers applies to the sequence $\partial s_{Y_{i}}\left(x^{*}\right)\subseteq Y_{i}$
as well: that is to say $\mathbb{H}\left(\frac{1}{N}\sum_{i=1}^{N}\partial s_{Y_{i}}\left(x^{*}\right),\ \E\partial_{Y}\left(x^{*}\right)\right)\xrightarrow{N\to\infty}0$
with probability $1$, and the identity $\E\partial_{Y}\left(x^{*}\right)=\partial_{\E Y}\left(x^{*}\right)$
already was established in Theorem~\ref{thm:4}. 

As for~(\ref{eq:maxxs}) note that 
\[
\max_{y\in Y_{i}}x^{*}(y)=x^{*}\left(\argmax_{Y_{i}}x^{*}\right)=x^{*}\left(\partial s_{Y_{i}}\left(x^{*}\right)\right)
\]
is an $\mathbb{R}$\nobreakdash-valued random variable and square
integrable, because $\left|\max_{y\in Y_{i}(\omega)}x^{*}(y)\right|\le\left\Vert x^{*}\right\Vert _{*}\cdot h(\omega)$.
Further, $\E\max_{y\in Y}x^{*}(y)=\E x^{*}\left(\partial s_{Y}\left(x^{*}\right)\right)=x^{*}\left(\E\partial s_{Y}\left(x^{*}\right)\right)=x^{*}\left(\partial s_{\E Y}\left(x^{*}\right)\right)=\max_{y\in\E Y}x^{*}(y)$.
The asymptotic distribution~(\ref{eq:maxxs}) thus follows from the
classical Central Limit Theorem for $\mathbb{R}$\nobreakdash-valued
random variables.
\end{proof}
\begin{rem}
Proposition~\ref{cor:19} reduces the original problem into two distinct,
orthogonal problems, as~(\ref{eq:64}) describes the behavior of
parallel sets, all of co-dimension one, whereas~(\ref{eq:maxxs})
is their orthogonal component.
\end{rem}

\subsection{The CLT for facets}

A functional $f\in X^{*}$ induces the particular selection~(\ref{eq:ki})
above by exposing a single point of the boundary of $\E Y$. With
this selection it was possible to describe convergence of corresponding
exposed points of the sample means. 

In what follows we take a kind of dual approach and fix a vector $x\in X$
first. Then there are nearest points to a compact, convex set $K$,
which we denote by 
\begin{equation}
k_{x}(K)\in\argmin\left\{ \left\Vert x-k^{\prime}\right\Vert :\,k^{\prime}\in K\right\} .\label{eq:5-8}
\end{equation}
In order to have $k_{x}(\cdot)$ uniquely defined we shall assume
that the unit ball of the norm $B_{\left\Vert \cdot\right\Vert }$
is strictly convex (cf.\ Definition~\ref{def:Strict}~\ref{enu:strict}
and Figure~\ref{fig:FacetHB-1}). We consider the random variable
$\mathbf{k}(\omega):=k_{x}\left(K(\omega)\right)\in K(\omega),$ which
is a particular selection, whose convergence behavior is being elaborated
in what follows.
\begin{center}
\begin{figure}
\centering{}\subfloat[Facet]{\includegraphics[width=0.5\textwidth]{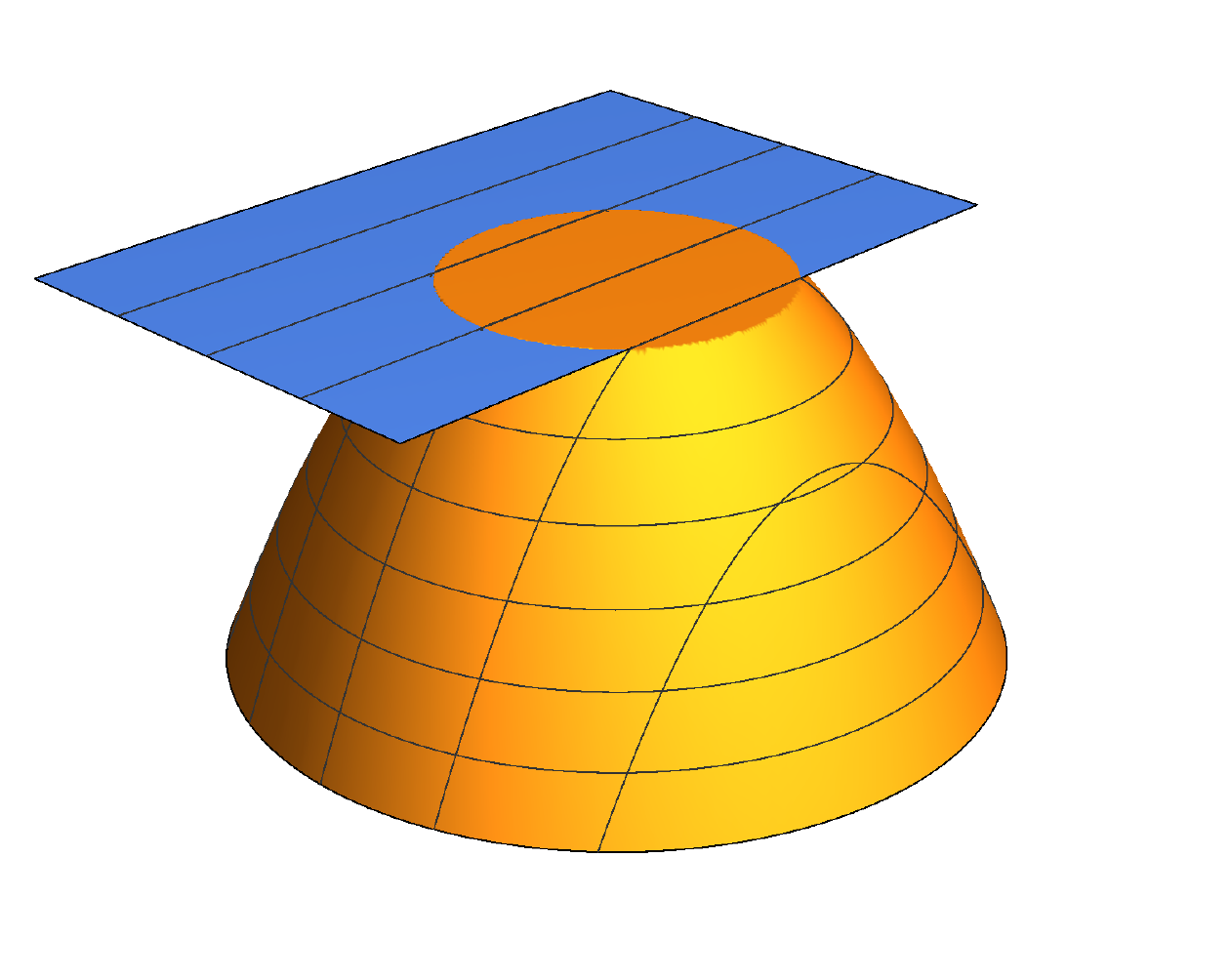}}\hfill{}\subfloat[Facet $\HB_{k-x}$ at $k\in\E X$: For all $u\in U$, $u+\frac{\HB_{k-x}(k-u)}{\left\Vert k-x\right\Vert }(k-x)\in\E X$.\label{fig:FacetHB-1}]{\begin{tikzpicture}[join=round, scale= 0.8]

\filldraw[fill=blue!20,fill opacity=0.8](-.6,-2.8){[rounded corners=20]--(3.1,-1.4)}--(2,3)--(-.3,3.6);
\draw[blue, ultra thick](3.26,-2.0)--(1.76,4);	%	area
\filldraw[black] (2.3,-1.7) circle (2pt) node[anchor=north] {$e$};
\node  at (.2,2.7) {$\mathbb E X$};

\draw[arrows=->,very thick](5.1,-.1)--(2.56,1.02); 
\filldraw[black] (5.1,-.1) circle (2pt) node[anchor=south] {$x$};

\draw[thick, gray](2.5,1) circle (.8);	%	Umgebung U
\draw[arrows=->, very thick](2.5,1)--(.5,0.5);
\filldraw[black] (2.5,1.0) circle (2pt) node[anchor=south west] {$k$};

\draw[rotate around={30:(2.2,9.4)}] ellipse (2 and 4);
\node at (1.2,0.2) {$\HB_{k-x}$};

\node at (3,2) {$U$};
\end{tikzpicture}}\caption{Facets\label{fig:Facets}}
\end{figure}
\par\end{center}
\begin{defn}[Facet]
\label{def:Facet}A (continuous) linear functional $f\neq0$ is a
\emph{facet at $k\in K$ }if there is a direction $d$ (associated
with $f$) and a neighborhood $U(k)$ such that $x-d\cdot f(x-k)\in\argmax_{k^{\prime}\in K}f(k^{\prime})$
for all $x\in U(k)$. Further, we shall say that \emph{$k\in K$ is
contained in a facet }if there exists a facet at $k\in K$.
\end{defn}

We collect the following important features of facets, as they will
be of interest in what follows (cf.\ Figure~\ref{fig:Facets} for
a simple, helpful illustration). 
\begin{rem}[Important properties of facets]
Let $f$ be a facet and $d$ a direction associated with the facet
 according to Definition~\ref{def:Facet}. 
\begin{enumerate}
\item \label{enu:d}The direction $d$ associated with the facet $f$ always
satisfies $f(d)=1$: to see this note first that necessarily $k\in\argmax_{K}f$,
as $k\in U(k)$. For $x\in U(k)$ fixed thus, $f(x)-f(d)\cdot f(x-k)=f(k)$,
hence $f(x-k)=f(d)\cdot f(x-k)$ for all $x\in U(k)$, which can hold
true only if $f(d)=1$.
\item Associated with a facet and a direction $d$ are the projection operators
$P^{\bot}:=\frac{d\otimes f}{f(d)}$ and $P:=1-P^{\bot}$, where $P^{\bot}(x)=\frac{d}{f(d)}f(x)$.
Indeed, it follows from~\ref{enu:d} that $P^{\bot}\circ P^{\bot}=P^{\bot}$,
and thus $P\circ P=P$. In the context of facets of the expected value
set below we consider the shifted projective map $x\mapsto k+P^{\bot}(x-k)$. 
\item A facet\textemdash up to a constant\textemdash is unique. To accept
this let $f$ be a facet, that is $x-d\cdot f(x-k)\in\argmax_{k^{\prime}\in K}f(k^{\prime})\subseteq K$.
For another facet $g$ hence $g\left(x-d\cdot f(x-k)\right)\le g(k)$,
that is 
\begin{equation}
g(x-k)\le g(d)\cdot f(x-k).\label{eq:facet}
\end{equation}
For $x\in U(k)$, $2k-x\in U(k)$ as well (at least for $x$ close
enough to $k$). Hence 
\[
g(2k-x-k)\le g(d)\cdot f(2k-x-k),
\]
or $g(x-k)\ge g(d)\cdot f(x-k)$, which, together with (\ref{eq:facet}),
implies that $g(x-k)=g(d)\cdot f(x-k)$, and hence $g(\cdot)=g(d)\cdot f(\cdot)$.
\item We have that $x-d\cdot f(x-k)\in U_{\varepsilon}(k)\subseteq\argmax_{K}f\subseteq K$
for some small $\varepsilon>0$. Indeed, for $\varepsilon>0$ small
enough and $x\in U_{\varepsilon/\left(1+\left\Vert d\right\Vert \left\Vert f\right\Vert _{*}\right)}(k)\subseteq U(k)$
we have that $\left\Vert x-d\cdot f(x-k)-k\right\Vert \le\left(1+\left\Vert d\right\Vert \left\Vert f\right\Vert _{*}\right)\cdot\left\Vert x-k\right\Vert <\varepsilon$,
and hence $x-d\cdot f(x-k)\in U_{\varepsilon}(k)\subseteq\argmax_{K}f\subseteq K$.
\item The direction $d^{\prime}$ of the facet can be chosen arbitrarily,
as long as $f\left(d^{\prime}\right)=1$.  Indeed, recall that $x-d^{\prime}\cdot f(x-k)\in U_{\varepsilon}(k)$.
As $f\left(x-d^{\prime}\cdot f(x-k)\right)=f(k)$ we find further
that $x-d^{\prime}\cdot f(x-k)\in\argmax_{K}f$, which is the assertion
for the alternative direction $d^{\prime}$ whenever $x\in U_{\varepsilon/\left(1+\left\Vert d^{\prime}\right\Vert \cdot\left\Vert f\right\Vert _{*}\right)}(k)$
as above.
\end{enumerate}
\end{rem}

We demonstrate next that the expected value set $\E Y$ inherits all
facets from the sample sets $Y_{i}$.
\begin{prop}
\label{thm:exp+facet}Let $K$ be a convex and compact set with facet
$f$ and $Y$ be a set-valued random variable with $P(Y=K)>0$. 
\begin{enumerate}
\item \label{enu:facet-1}Then $\E Y$ has a facet as well; more precisely,
$f$ is a facet of $\E Y$ at each point in the relative interior
of $\E\partial s_{Y}(f)$;
\item \label{enu:facet}Let $Y$ and $\left(Y_{i}\right)$ be i.i.d.\ random
sets as in the discrete setting~(\ref{eq:discrete}). Then $f$ is
a facet of $\overline{Y}_{N}$ with probability $1-(1-p)^{N}$, where
$p:=P(Y=K)$.
\end{enumerate}
\end{prop}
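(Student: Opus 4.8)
The plan is to establish part~\ref{enu:facet-1} by transporting the defining inequality of a facet from $K$ to $\E Y$ using the linearity of the expectation together with Proposition~\ref{thm:4}, and then to obtain part~\ref{enu:facet} from part~\ref{enu:facet-1} by a direct combinatorial argument in the discrete setting, identifying the event on which at least one of the $Y_i$ equals $K$. For part~\ref{enu:facet-1}, write $e$ for a point in the relative interior of $\E\partial s_Y(f)$ and let $d$ be a direction associated with the facet $f$ of $K$, so $f(d)=1$ and $x - d\cdot f(x-k) \in \argmax_{k'\in K} f(k')$ for all $x$ in a neighborhood $U(k)$ of the point $k$ exposed/contained in the facet of $K$. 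The key point is that, since $P(Y=K)>0$, the set-valued expectation $\E\partial s_Y(f)$ can be decomposed: conditioning on $\{Y=K\}$ contributes a copy of $\partial s_K(f) = \argmax_K f$ (scaled by $p = P(Y=K)$) plus the contribution from $\{Y\neq K\}$, which is some nonempty compact convex set. I would make this precise by writing, for any integrable selection $\mathbf{y}$ of $\partial s_Y(f)$, the decomposition $\int \mathbf{y}\,\mathrm dP = p\int_{\{Y=K\}}\mathbf y\,\frac{\mathrm dP}{p} + (1-p)\int_{\{Y\neq K\}}\mathbf y\,\frac{\mathrm dP}{1-p}$; the first factor ranges over (a dense subset of, hence by closedness all of) $\argmax_K f$, and the second over $\E(\partial s_Y(f)\mid Y\neq K)$.

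Given this, take a point $e$ in the relative interior of $\E\partial s_Y(f)$ and choose a representation $e = p\,k_0 + (1-p)\,c_0$ with $k_0$ in the relative interior of $\argmax_K f$ (possible since $e$ is relatively interior) and $c_0\in\E(\partial s_Y(f)\mid Y\neq K)$. Since $k_0$ is relatively interior to the facet of $K$, there is a neighborhood $U(k_0)$ such that $x - d\cdot f(x-k_0)\in\argmax_K f$ for all $x\in U(k_0)$. Now for $z$ in the neighborhood $V := p\cdot U(k_0) + (1-p)c_0$ of $e$, write $z = p\,x + (1-p)c_0$ with $x\in U(k_0)$; using $f(c_0) = \max_{\E Y}f$ (which holds by Proposition~\ref{thm:4}, since $c_0$ arises from selections of $\argmax_{Y}f$ on $\{Y\neq K\}$) and $f(d)=1$, a short computation gives $z - d\cdot f(z-e) = p\,(x - d\cdot f(x-k_0)) + (1-p)\,c_0 \in p\argmax_K f + (1-p)\E(\partial s_Y(f)\mid Y\neq K) = \E\partial s_Y(f) = \partial s_{\E Y}(f) \subseteq \E Y$, which is exactly the defining property of a facet of $\E Y$ at $e$ with the same direction $d$. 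The main obstacle here is bookkeeping the conditional-expectation decomposition of the Aumann integral and checking that the relative interior of $\E\partial s_Y(f)$ really does admit a representation whose $\{Y=K\}$-component is relatively interior to $\argmax_K f$; this requires that $\argmax_K f$ (which has positive weight $p$) be among the summands, which is where the hypothesis $P(Y=K)>0$ is essential.

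For part~\ref{enu:facet}, in the discrete setting with $Y, Y_1,\dots,Y_N$ i.i.d.\ and $p = P(Y=K)$, consider the event $E_N := \{\exists\, i\le N:\ Y_i = K\}$, which by independence has $P(E_N) = 1 - (1-p)^N$. On $E_N$ one of the summands of $\overline Y_N = \frac1N\sum_{i=1}^N Y_i$ equals $\frac1N K$, which has the facet $f$ with direction $N d$ (since scaling a set by $\frac1N$ rescales the associated direction accordingly, while $f$ is unchanged up to the normalization $f(Nd)=N\neq 1$ — more cleanly, $\frac1N K$ has facet $f$ with direction $d$ since $f(d)=1$ and $x - d f(x-\frac kN)\in\argmax_{\frac1N K}f$ follows by dividing the facet relation for $K$ by $N$ after the substitution $x\mapsto Nx$). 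Then applying part~\ref{enu:facet-1} to the (finite, atomic) average — or equivalently invoking Lemma~\ref{Lem:argmax} equation~\eqref{eq:argmaxSn} to write $\argmax_{\overline Y_N}f = \frac1N\sum_i\argmax_{Y_i}f$ and repeating the neighborhood argument of part~\ref{enu:facet-1} with $p_j$ replaced by $\frac1N$ — shows that $f$ is a facet of $\overline Y_N$ at each point in the relative interior of $\frac1N\sum_i\argmax_{Y_i}f$, which is nonempty. Hence $f$ is a facet of $\overline Y_N$ on $E_N$, i.e.\ with probability $1-(1-p)^N$, as claimed.
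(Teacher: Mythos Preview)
Your argument is correct and follows essentially the same route as the paper's: decompose $\argmax_{\E Y}f=\E\partial s_Y(f)$ by conditioning on $\{Y=K\}$ and carry the facet-disk from $K$ through the $p$-weighted summand (the paper does this via the chain of displayed inclusions starting from Lemma~\ref{Lem:argmax}), and for part~\ref{enu:facet} apply the same reasoning to $\overline Y_N$ on the event $\{\exists\, i\le N:\ Y_i=K\}$. One cosmetic remark: your parenthetical claim $f(c_0)=\max_{\E Y}f$ is not correct as stated, since $c_0$ lies in the \emph{conditional} expectation $\E[\partial s_Y(f)\mid Y\neq K]$ rather than in $\E\partial s_Y(f)$ itself; but you never use it\textemdash your displayed identity $z-d\cdot f(z-e)=p\bigl(x-d\cdot f(x-k_0)\bigr)+(1-p)c_0$ only requires $z-e=p(x-k_0)$.
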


\begin{proof}
Let $K$ have a facet $f$ at some $k\in K$. Then 
\[
\argmax_{y\in K}f(y)\supseteq\left\{ x-d\cdot f(x-k)\colon x\in B_{r}(k)\right\} 
\]
for some $r>0$ and $d$ with $f(d)=1$. Choose again the selection
$\mathbf{k}(\omega)=\partial s_{Y(\omega)}\left(f\right)$. By~(\ref{Lem:argmax})
thus\footnote{The conditional expectation is understood in the naïve sense based
on conditional probabilities here: note that the sets $\left\{ Y=K\right\} $
and $\left\{ Y\not=K\right\} $ have strictly positive probability.} 
\begin{align*}
\argmax_{y\in\E Y} & f(y)=\E\argmax_{y\in Y}f(y)\\
 & =P(Y\neq K)\cdot\E\left[\left.\argmax_{y\in Y}f(y)\right|\,Y\neq K\right]+P(Y=K)\cdot\E\left[\left.\argmax_{y\in Y}f(y)\right|\,Y=K\right]\\
 & =P(Y\neq K)\cdot\E\left[\left.\argmax_{y\in Y}f(y)\right|Y\neq K\right]+P(Y=K)\cdot\argmax_{y\in K}f(y)\\
 & \supseteq P(Y\neq K)\cdot\E\left[\mathbf{k}|Y\neq K\right]+P(Y=K)\cdot\left\{ x-d\cdot f(x-k)\colon x\in B_{r}(k)\right\} \\
 & =P(Y\neq K)\cdot\E\mathbf{k}+P(Y=K)\cdot\left\{ k+x-d\cdot f(x)\colon x\in B_{r}(0)\right\} \\
 & =P(Y\neq K)\cdot\E\,k+P(Y=K)\cdot k+\left\{ x-d\cdot f(x)\colon x\in B_{r\cdot P(Y=K)}(0)\right\} ,
\end{align*}
hence $f$ is a facet of $\E Y$ at every $k^{\prime}\in P(Y\neq K)\cdot\E\mathbf{k}+P(Y=K)\cdot k\subset\E\mathbf{k}$.
(Recall that $P(Y=K)>0$, the conditional expectation in the previous
display thus does not cause difficulties).

As for~\ref{enu:facet} note that there is $i^{*}$ so that $K=K_{i^{*}}$.
By~\ref{enu:facet-1}, $\overline{Y}_{N}$ has the facet $f$ as
soon as $Y_{i}=K$, which happens with the probability $P(Y_{i}=K)\ge1-(1-p)^{N}$
at the $N$\nobreakdash-th draw. 
\end{proof}
\begin{rem}[The converse is false]
Figure~\Floref{NoFacet} provides an example of two sets $A$ and
$B$ without facets, although their average $\nicefrac{1}{2}(A+B)$
has a facet. Hence if $\E Y$ has a facet, then this is not necessarily
the case for $Y$, not even for discrete random variables $Y$. 
\begin{figure}
\begin{centering}
\includegraphics[width=0.3\textwidth]{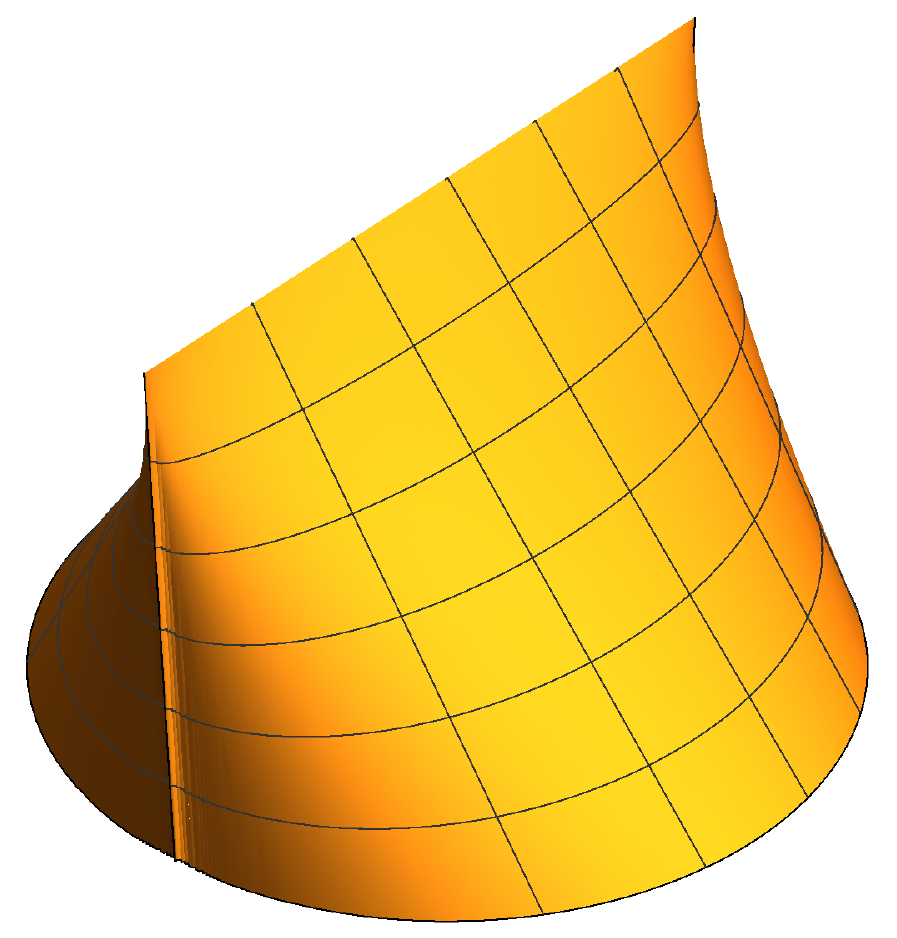}\includegraphics[width=0.3\textwidth]{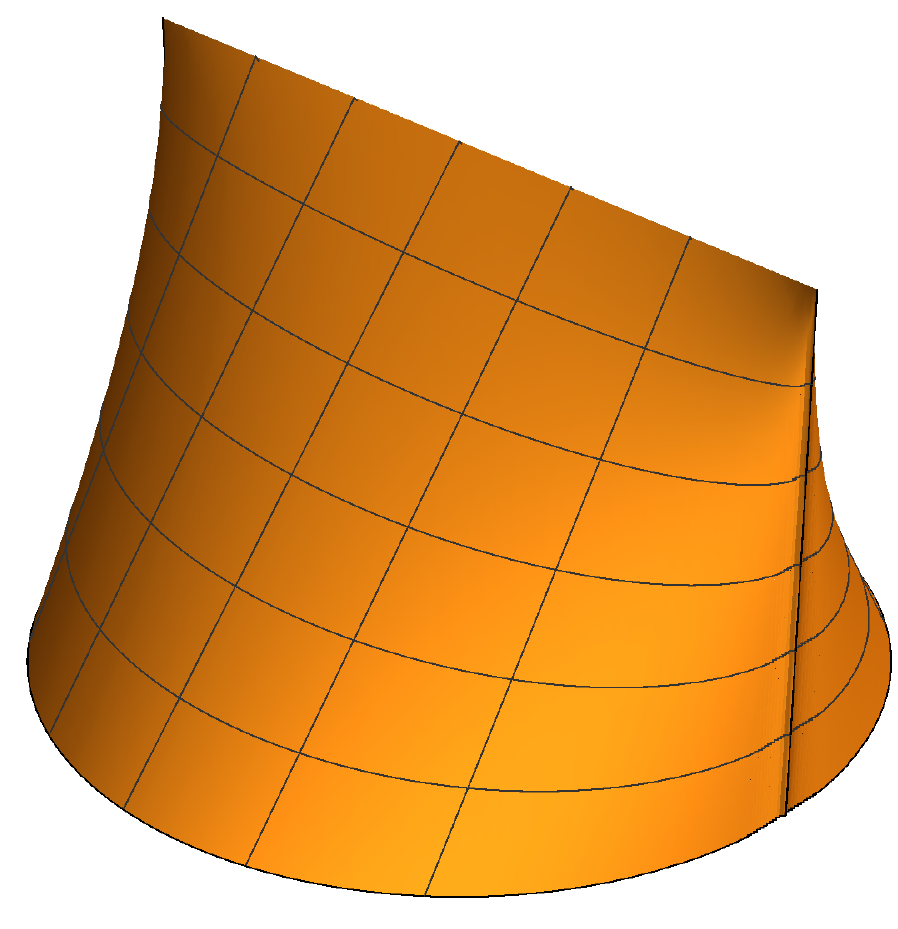}
\par\end{centering}
\caption{\label{Flo:NoFacet}$\frac{1}{2}(A+B)$ has a facet at its top, although
$A$ and $B$ have no facet (cf.\ Figure~\ref{fig:ALineB}) (the
depicted solid's equation is $x^{2}z^{2}+y^{2}\le z^{2}$).}
\end{figure}
\end{rem}

To describe the convergence of set-valued sample means close to a
facet of $\E Y$ it will be convenient to have an outer normal available.
The facet normal is given by the derivative of the norm (cf.~Figure~\ref{fig:FacetHB-1}
for an illustration with an elliptic unit ball, and \citet{Vitale2000}
for facet normals). 
\begin{defn}[Derivative of the Norm]
We shall denote an element of the derivative of the norm $x\in\mathbb{R}^{d}$
by $\HB_{x}$,
\[
\HB_{x}\in\partial s_{B^{*}}(x)\subseteq\mathbb{R}^{d};
\]
here, $B^{*}:=\left\{ x\in\mathbb{R}^{d}:\,\left\Vert x\right\Vert _{*}\le1\right\} $
is the unit ball in the dual space.
\end{defn}

\begin{rem}
By~(\ref{eq:supportFctn}) and~(\ref{eq:Argmax}) it holds that
\[
\HB_{x}(x)=\left\Vert x\right\Vert \text{ and }\left|\HB_{x}(h)\right|\le\left\Vert h\right\Vert 
\]
for all $h\in\mathbb{R}^{d}$ (that is to say the norm in the dual
is one, $\left\Vert \HB_{x}\right\Vert _{*}=1$, where the norm is
the Lipschitz constant $\left\Vert \lambda\right\Vert _{*}:=\sup_{h\neq0}\frac{\left|\lambda(h)\right|}{\left\Vert h\right\Vert }=L(\lambda)$).
\end{rem}

\begin{thm}
Given $x\notin\overline{\E Y}$, suppose that $k\in\argmin\left\{ \left\Vert x-y\right\Vert :\,y\in\E Y\right\} $,
the closest point to $x$, is contained in a facet $f$ (cf.~(\ref{eq:5-8})).
Then the facet satisfies $f(\cdot)=-\alpha\cdot\HB_{k-x}(\cdot)$
for some $\alpha>0$.
\end{thm}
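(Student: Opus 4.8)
The plan is to exploit two facts: (i) $k$ is the unique nearest point of $\E Y$ to $x$, so the vector $k-x$ is an outer normal to $\E Y$ at $k$ (standard projection onto convex sets, combined with strict convexity of the unit ball which was assumed to make $k_x(\cdot)$ well defined); and (ii) a facet functional, being constant on the $\argmax$-set containing a neighborhood of $k$ inside $\partial s_{\E Y}(f)$, is itself an outer normal to $\E Y$ at $k$. Since a facet is unique up to a positive constant (one of the ``important properties of facets'' listed above), it suffices to show that $-\HB_{k-x}$ is, in fact, a facet functional at $k$ — then the uniqueness clause forces $f(\cdot)=-\alpha\,\HB_{k-x}(\cdot)$ for some $\alpha>0$, with the sign coming from both $f$ and $-\HB_{k-x}$ being outer (as opposed to inner) normals.

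First I would show $-\HB_{k-x}$ exposes (weakly) the face containing $k$. By the remark after the definition of $\HB$, $\HB_{k-x}(k-x)=\|k-x\|>0$ and $|\HB_{k-x}(h)|\le\|h\|$ for all $h$. The nearest-point property gives, for every $y\in\E Y$ and $t\in(0,1]$, $\|x-k\|^2\le\|x-(k+t(y-k))\|^2$; letting $t\downarrow0$ and using that the squared norm's one-sided directional derivative at $k-x$ in direction $y-k$ equals $2\,\|k-x\|\,\HB_{k-x}(y-k)$ (up to the choice of subgradient representative, which is where a short argument is needed), one obtains $\HB_{k-x}(y-k)\le0$, i.e. $\HB_{k-x}(y)\le\HB_{k-x}(k)$ for all $y\in\E Y$. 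Hence $g:=-\HB_{k-x}$ satisfies $g(y)\ge g(k)$ is false — rather $-g=\HB_{k-x}$ attains its max over $\E Y$ at $k$, so $k\in\argmax_{\E Y}\HB_{k-x}$; equivalently $\HB_{k-x}$ is an outer normal at $k$. (I will phrase the final functional with whichever sign makes ``$\argmax$'' correct; the factor $\alpha>0$ absorbs normalization.)

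Next I would invoke that $k$ lies in a facet $f$. By Definition~\ref{def:Facet} there is a direction $d$ with $f(d)=1$ and a neighborhood $U(k)$ with $x'-d\,f(x'-k)\in\argmax_{y\in\E Y}f(y)$ for all $x'\in U(k)$; in particular $\argmax_{\E Y}f$ contains a relatively open neighborhood of $k$ within the hyperplane $\{f(\cdot)=f(k)\}$, and $f$ is an outer normal to $\E Y$ at every such point. Now both $f$ and $\HB_{k-x}$ are outer normals to the convex set $\E Y$ at the common point $k$; if I can show the normal cone to $\E Y$ at $k$ is one-dimensional, I am done. This is exactly the content of $k$ being in a facet: the face $\argmax_{\E Y}f$ has full codimension one (it spans a hyperplane locally around $k$), so the normal cone at $k$ is the single ray through $f$. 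Concretely, suppose $\lambda$ is any outer normal at $k$; then $\lambda$ is constant on the face $\argmax_{\E Y}f\ni\{x'-d\,f(x'-k):x'\in U(k)\}$, and since this set contains $k+(x'-k)-d\,f(x'-k)=k+P(x'-k)$ for $x'$ ranging over $U(k)$ — with $P=1-d\otimes f$ the facet projection whose range is the hyperplane $\ker f$ — constancy of $\lambda$ there forces $\lambda\circ P=\lambda(k)\cdot 0$, i.e. $\lambda$ vanishes on $\ker f$, hence $\lambda$ is a scalar multiple of $f$. Applying this with $\lambda=\HB_{k-x}$ yields $\HB_{k-x}=c\,f$; since both are outer normals (positive pairing with $k-x$ on one side, outer-normal property on the other) the scalar $c$ is positive, and writing $\alpha:=1/c>0$ gives $f(\cdot)=\alpha\,\HB_{k-x}(\cdot)$, equivalently $f(\cdot)=-\alpha\,\HB_{k-x}(\cdot)$ after matching the sign convention in which $\HB_{k-x}$ points from $x$ toward $\E Y$.

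The main obstacle is the first step: cleanly passing from the nearest-point inequality to the normal-cone statement $\HB_{k-x}\in N_{\E Y}(k)$ when the norm is merely strictly convex (not smooth), so that $\HB_{k-x}$ is a \emph{selection} of a possibly set-valued derivative. I expect to handle this by the subgradient inequality for the convex function $y\mapsto\|x-y\|$: its subdifferential at $k$ is $-\partial s_{B^*}(k-x)=\{-\HB_{k-x}\}$-type (via $\|x-k\|=\HB_{k-x}(k-x)$ and $1$-Lipschitzness), and minimality of $\|x-\cdot\|$ over $\E Y$ at $k$ means $0\in\partial(\|x-\cdot\|+\iota_{\E Y})(k)$, giving $\HB_{k-x}\in N_{\E Y}(k)$ directly. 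Everything after that is the uniqueness-of-facet bookkeeping, which is routine given the properties of facets already recorded above.
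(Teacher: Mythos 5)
Your proposal is correct and takes essentially the same route as the paper: both arguments show that the nearest-point property makes $-\HB_{k-x}$ a supporting functional of $\E Y$ at $k$ (the paper via Hahn--Banach separation of $\E Y$ from the ball $B_{\left\Vert x-k\right\Vert }(x)$, you via the optimality condition $0\in\partial\bigl(\left\Vert x-\cdot\right\Vert +\iota_{\E Y}\bigr)(k)$ --- the same fact in different clothing) and then conclude from the one-dimensionality of the normal cone at a facet point, i.e.\ the uniqueness of the facet up to a positive constant, which you spell out more explicitly than the paper does. One sign needs repairing: the nearest-point inequality yields $\HB_{k-x}(y-k)\ge0$ for all $y\in\E Y$ (not $\le0$), so it is $-\HB_{k-x}$, not $\HB_{k-x}$, that lies in $N_{\E Y}(k)$; with that fixed, the conclusion $f=-\alpha\,\HB_{k-x}$ with $\alpha>0$ falls out directly and the self-contradictory closing clause ``$f=\alpha\,\HB_{k-x}$, equivalently $f=-\alpha\,\HB_{k-x}$'' becomes unnecessary.
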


\begin{proof}
Given $x$, choose $k$ the nearest point in $\E Y$ such that $d\left(x,\E Y\right)=\left\Vert x-k\right\Vert $.
Both, $\E Y$ and the ball $B_{\left\Vert x-k\right\Vert }(x)$ are
convex, and $k$ is a common point. Moreover $\E Y$ and the open
ball $\overset{\circ}{B}_{\left\Vert x-k\right\Vert }(x)$ do not
intersect. The Hahn\textendash Banach Theorem provides a functional
(separating plane) for both sets. As the facet is unique the separating
functional $\HB_{k-x}(\cdot)$ is the facet.
\end{proof}
\begin{thm}
\label{thm:name}Given $x\notin\overline{\E Y}$, suppose that $k_{x}\left(\E Y\right)$,
the nearest point to $x$, is contained in a facet of $\E Y$. Then
there is a neighborhood $V(x)$ such that the Pompeiu\textendash Hausdorff
distance is $\mathbb{H}\left(\left\{ v\right\} ,\,\E Y\right)=\HB_{k-x}(k-v)$
for all $v\in V$, and moreover $\mathbb{H}\left(\left\{ v\right\} ,\,\E Y\right)-\mathbb{H}\left(\left\{ x\right\} ,\,\E Y\right)=\HB_{k-x}(x-v)$.
\end{thm}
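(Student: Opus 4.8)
The plan is to exploit the local linear structure that a facet provides at the nearest point $k = k_x(\E Y)$. By the preceding theorem, the facet functional coincides (up to a positive constant) with $\HB_{k-x}$; write $n := \HB_{k-x}$, which by the Remark preceding the statement satisfies $n(k-x) = \lVert k-x\rVert = d(x,\E Y)$ and $\lVert n\rVert_* = 1$. The facet property furnishes a direction $d$ with $n(d) = 1$ and a neighborhood $U(k)$ such that $y - d\cdot n(y-k) \in \argmax_{\E Y} n \subseteq \E Y$ for every $y \in U(k)$; in particular the relative interior of the facet is a flat neighborhood of $k$ inside $\partial\E Y$ lying in the hyperplane $\{n(\cdot) = n(k)\}$.

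First I would show that for $v$ close enough to $x$, the nearest point $k_v(\E Y)$ equals $v - d\cdot n(v-k)$, i.e.\ the point obtained by projecting $v$ onto the facet hyperplane along $d$; equivalently, since we may choose the facet direction freely as long as $n(d)=1$ (last item of the Remark on facets), we can take $d = (k-x)/\lVert k-x\rVert$, so that $d$ is the "radial" direction and $v - d\cdot n(v-k)$ is literally the orthogonal-type projection of $v$ onto the facet. The key geometric fact is the same separation argument as in the previous theorem: $\E Y$ and the open ball $\overset{\circ}{B}_{\lVert x-k\rVert}(x)$ are disjoint and $n$ separates them, so near $k$ the boundary of $\E Y$ agrees with the facet hyperplane; hence the metric projection of a nearby point $v$ onto $\E Y$ is its projection onto that hyperplane, which is $v - d\cdot n(v-k)$. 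This holds for all $v$ in a neighborhood $V(x)$, shrinking $V$ if necessary so that the projected point stays in the relative interior $U(k)$ of the facet and so that $v$ itself stays outside $\E Y$ on the same side.

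With the projection identified, the Pompeiu--Hausdorff distance between the singleton $\{v\}$ and $\E Y$ is just $d(v,\E Y) = \lVert v - k_v(\E Y)\rVert = \lVert d\cdot n(v-k)\rVert = \lvert n(v-k)\rvert \cdot \lVert d\rVert$. Choosing $d = (k-x)/\lVert k-x\rVert$ makes $\lVert d\rVert = 1$ and $d\cdot n(v-k)$ points along $k-x$, and a short computation gives $\mathbb H(\{v\},\E Y) = n(k-v)$ once one checks the sign is correct (for $v$ on the far side of the facet from $\E Y$, $n(v-k) \le 0$, so $\lvert n(v-k)\rvert = n(k-v)$; this is where having $x \notin \overline{\E Y}$ and restricting to $V(x)$ matters). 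That establishes the first claim, $\mathbb H(\{v\},\E Y) = \HB_{k-x}(k-v)$ for $v \in V$. The second claim is then immediate from linearity of $n = \HB_{k-x}$: taking $v = x$ gives $\mathbb H(\{x\},\E Y) = n(k-x)$, so $\mathbb H(\{v\},\E Y) - \mathbb H(\{x\},\E Y) = n(k-v) - n(k-x) = n(x-v) = \HB_{k-x}(x-v)$.

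The main obstacle is the first step: rigorously pinning down that the metric projection onto $\E Y$ of every point in a full neighborhood of $x$ lands in the relative interior of the facet and is given by the along-$d$ projection formula. This requires (a) using strict convexity of the unit ball $B_{\lVert\cdot\rVert}$ to get uniqueness of nearest points (as assumed around~\eqref{eq:5-8}), (b) a continuity/stability argument that $k_v(\E Y) \to k_x(\E Y) = k$ as $v \to x$, so that $k_v(\E Y)$ eventually enters the relatively open flat piece $U(k) \cap \partial\E Y$, and (c) on that flat piece, identifying the metric projection with the linear projection $v \mapsto v - d\cdot n(v-k)$ via the ball-separation argument. Everything after that is linear algebra and bookkeeping with $\HB_{k-x}$.
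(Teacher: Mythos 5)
Your argument is correct and is essentially the paper's own proof: both translate the facet neighborhood $U(k)$ by $x-k$ to obtain $V(x)$, exhibit the explicit facet point $v+\frac{\HB_{k-x}(k-v)}{\lVert k-x\rVert}(k-x)$ as the closest point of $\E Y$ to $v$ (your along-$d$ projection with $d=(k-x)/\lVert k-x\rVert$ is exactly this point), pair it with the lower bound coming from $\lVert \HB_{k-x}\rVert_{*}=1$, and finish by linearity of $\HB_{k-x}$. Only watch the orientation: the preceding theorem gives the facet as $-\alpha\,\HB_{k-x}$ with $\alpha>0$, so the flat piece lies in $\argmax_{\E Y}\left(-\HB_{k-x}\right)=\argmin_{\E Y}\HB_{k-x}$ rather than $\argmax_{\E Y}\HB_{k-x}$ --- a sign slip that your concluding check ($\HB_{k-x}(v-k)\le 0$ for $v$ near $x$) already compensates for.
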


\begin{proof}
By the above theorem the facet is $-\HB_{k-x}$. Let us equip the
facet $-\HB_{k-x}$ with the direction $d:=-\frac{k-x}{\left\Vert k-x\right\Vert }$,
such that $-\HB_{k-x}(d)=1$. Being a facet, there is by definition
a neighborhood $U(k)$ such that $u-d\cdot\HB_{k-x}(u-k)\in\argmax_{\E X}\left(-\HB_{k-x}\right)$
for all $u\in U(k)$. Define $V(x):=U(k)-(k-x)$. Then $v+\frac{\HB_{k-x}(k-v)}{\left\Vert k-x\right\Vert }(k-x)\in\argmax_{\E Y}\left(-\HB_{k-x}\right)$
for every $v\in V(x)$. Hence, as $k\in\E Y$, 
\begin{align}
\mathbb{H}\left(\left\{ v\right\} ,\,\E Y\right) & =\HB_{k-x}(k-v).\label{eq:19}
\end{align}
The latter statement of the theorem follows from linearity, as $\mathbb{H}\left(\left\{ v\right\} ,\,\E Y\right)-\mathbb{H}\left(\left\{ x\right\} ,\,\E Y\right)=\HB_{k-x}(k-v)-\HB_{k-x}(k-x)=\HB_{k-x}(x-v).$
\end{proof}
With these preparations we can finally describe the distribution along
facets.
\begin{thm}
Given $x$, suppose that $k_{x}\left(\E Y\right)$, the nearest point
to $x$, is contained in a facet of $\E Y$. Then 
\begin{equation}
\sqrt{N}\left(\mathbb{H}\left(\{x\},\overline{Y}_{N}\right)-\mathbb{H}\left(\{x\},\E Y\right)\right)\xrightarrow{\mathcal{D}}\mathcal{N}\left(0,\,\HB_{k-x}\cdot\Sigma\cdot\HB_{k-x}^{\top}\right)\label{eq:HBDist-1}
\end{equation}
where $k$ and $\Sigma$ are as in Theorem~\ref{thm:15} and $\overline{Y}_{N}:=\frac{1}{N}\sum_{i=1}^{N}Y_{i}$.
\end{thm}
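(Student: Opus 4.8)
The plan is to upgrade the law of large numbers $\overline Y_N\to\E Y$ to an exact, linear formula for $\mathbb H\left(\{x\},\overline Y_N\right)$ valid for all large $N$, and then to read off the limit from the classical multivariate central limit theorem. First I would fix notation: write $\HB:=\HB_{k-x}$, so that $\left\Vert\HB\right\Vert_{*}=1$; by the two preceding theorems $\argmax_{\E Y}(-\HB)$ is the facet of $\E Y$ at $k$ (facet functionals being determined only up to a positive factor), and I equip it with the direction $d:=-\frac{k-x}{\left\Vert k-x\right\Vert}$, so that $-\HB(d)=1$ and $\left\Vert d\right\Vert=1$. Since $k=k_x(\E Y)$ lies in this facet, $k\in\partial s_{\E Y}(-\HB)=\E\partial s_Y(-\HB)$ by Proposition~\ref{thm:4}, and exactly as in Theorem~\ref{thm:15}, but now with the facet functional $-\HB$ playing the role of the exposing functional, I fix a measurable selection $\mathbf k(\omega)\in\partial s_{Y(\omega)}(-\HB)\subseteq Y(\omega)$ with $k=\E\mathbf k$, take the i.i.d.\ copies $\mathbf k_i(\omega)\in\partial s_{Y_i(\omega)}(-\HB)$, and set $\overline{\mathbf k}_N:=\frac1N\sum_{i=1}^N\mathbf k_i$ and $\Sigma:=\E(\mathbf k-k)(\mathbf k-k)^{\top}$ (finite, since $\left\Vert\mathbf k\right\Vert\le\left\Vert Y\right\Vert\le h\in L^{2}$).

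The core of the argument is the claim that, $P$-almost surely, $\mathbb H\left(\{x\},\overline Y_N\right)=\HB\left(\overline{\mathbf k}_N-x\right)$ for all large $N$. The lower bound is soft: by Lemma~\ref{Lem:argmax}, equation~(\ref{eq:argmaxSn}), $\overline{\mathbf k}_N\in\frac1N\sum_{i=1}^N\partial s_{Y_i}(-\HB)=\partial s_{\overline Y_N}(-\HB)\subseteq\overline Y_N$, so $\overline Y_N$ lies in the supporting half-space $\left\{\HB\ge\HB(\overline{\mathbf k}_N)\right\}$, whose distance from $x$ equals $\HB(\overline{\mathbf k}_N-x)$ because $\left\Vert\HB\right\Vert_{*}=1$. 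For the matching upper bound I would exhibit an explicit witness in $\overline Y_N$ at exactly that distance, namely $p_N:=x-\HB(\overline{\mathbf k}_N-x)\,d$; a one-line computation using $-\HB(d)=1$ gives $\HB(p_N)=\HB(\overline{\mathbf k}_N)$, so $p_N$ lies on the supporting hyperplane of $\overline Y_N$, and $\left\Vert x-p_N\right\Vert=\HB(\overline{\mathbf k}_N-x)$, so everything reduces to checking $p_N\in\partial s_{\overline Y_N}(-\HB)$. This is the only genuinely delicate point, and it is exactly where the facet hypothesis is used: being a facet, $\partial s_{\E Y}(-\HB)$ is a $(d-1)$-dimensional face of $\E Y$ containing $k$ in its relative interior, hence contains a relative ball about $k$; by the law of large numbers for subdifferentials (Proposition~\ref{cor:19}) the faces $\partial s_{\overline Y_N}(-\HB)=\frac1N\sum_{i=1}^N\partial s_{Y_i}(-\HB)$ converge to $\partial s_{\E Y}(-\HB)$ in Pompeiu\textendash Hausdorff distance, and a standard convexity fact (Hausdorff-convergent convex bodies inherit a relative ball of any fixed smaller radius from their limit) then gives that $\partial s_{\overline Y_N}(-\HB)$ eventually contains a relative ball of some fixed positive radius about a point converging to $k$. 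Since $\overline{\mathbf k}_N\to k$ by the ordinary vector law of large numbers and hence $p_N\to k$, the point $p_N$ lies inside that ball for all large $N$, so $p_N\in\overline Y_N$; combining the two bounds proves the claim.

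Granting the claim, Theorem~\ref{thm:name} (applied with $v=x$) gives $\mathbb H(\{x\},\E Y)=\HB_{k-x}(k-x)$, so by linearity of $\HB$,
\[
\sqrt N\left(\mathbb H\left(\{x\},\overline Y_N\right)-\mathbb H\left(\{x\},\E Y\right)\right)=\sqrt N\,\HB\left(\overline{\mathbf k}_N-k\right)=\HB\!\left(\frac1{\sqrt N}\sum_{i=1}^N(\mathbf k_i-k)\right).
\]
The vectors $\mathbf k_i-k$ are i.i.d., centred and square-integrable with covariance $\Sigma$, so the multivariate central limit theorem gives $\frac1{\sqrt N}\sum_{i=1}^N(\mathbf k_i-k)\xrightarrow{\mathcal D}\mathcal N_d(0,\Sigma)$, and applying the continuous linear functional $\HB_{k-x}$ (continuous mapping theorem) produces the scalar Gaussian limit $\mathcal N\!\left(0,\,\HB_{k-x}\cdot\Sigma\cdot\HB_{k-x}^{\top}\right)$, which is~(\ref{eq:HBDist-1}). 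The hard part is thus not the probabilistic limit but the deterministic step converting Hausdorff convergence of the faces into the containment $p_N\in\partial s_{\overline Y_N}(-\HB)$; everything else is laws of large numbers, linearity, and one explicit computation.
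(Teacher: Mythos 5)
Your proof is correct, but it takes a genuinely different route from the paper's. The paper works with the metric\nobreakdash-projection selections $\mathbf{k}_{i}:=k_{x}(Y_{i})$, forms the shifted points $V_{i}:=\mathbf{k}_{i}+x-k$, telescopes $\mathbb{H}\left(\{x\},\E Y\right)-\mathbb{H}\left(\{x\},\overline{Y}_{N}\right)$ into three differences, evaluates each one by the linear formula of Theorem~\ref{thm:name}, and controls the event $\overline{V}_{N}\in V(x)$ by a large\nobreakdash-deviation estimate. You instead work with selections from the face, $\mathbf{k}_{i}\in\partial s_{Y_{i}}\left(-\HB_{k-x}\right)$, and prove the exact identity $\mathbb{H}\left(\{x\},\overline{Y}_{N}\right)=\HB_{k-x}\left(\overline{\mathbf{k}}_{N}-x\right)$ for all large $N$ by a supporting\nobreakdash-half\nobreakdash-space lower bound plus the explicit witness $p_{N}$. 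Your route has two advantages: it makes transparent where the facet hypothesis enters (it supplies the relative ball in $\partial s_{\E Y}\left(-\HB_{k-x}\right)$ that lets $p_{N}$ land in the face of $\overline{Y}_{N}$), and it avoids the paper's implicit reliance on the metric projection commuting with Minkowski averaging (the paper's middle telescoping term needs $\frac{1}{N}\sum_{i}k_{x}(Y_{i})$ to be the nearest point of $\overline{Y}_{N}$, which is not of the same nature as Lemma~\ref{Lem:argmax} and is not argued there). The price is the deterministic step you flag yourself: the ball\nobreakdash-inheritance argument must be carried out after translating the faces along $d$ into a common hyperplane, since $\partial s_{\overline{Y}_{N}}\left(-\HB_{k-x}\right)$ lives in $\left\{ \HB_{k-x}=\min_{\overline{Y}_{N}}\HB_{k-x}\right\} $ rather than in $\left\{ \HB_{k-x}=\HB_{k-x}(k)\right\} $; with that adjustment the separation argument goes through, and your almost\nobreakdash-sure eventual identity does yield convergence in distribution because the exceptional event has vanishing probability, replacing the paper's large\nobreakdash-deviation bound. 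Finally, note that you and the paper resolve the ambiguity in ``$\Sigma$ as in Theorem~\ref{thm:15}'' differently (face selection versus metric projection $k_{x}(Y)$); these selections can genuinely differ, but only the scalar $\HB_{k-x}\cdot\Sigma\cdot\HB_{k-x}^{\top}=\var\,\bigl(\min_{y\in Y}\HB_{k-x}(y)\bigr)$ enters the limit law, and your choice is the one for which this quantity is unambiguous and matches the asymptotics of $\mathbb{H}\left(\{x\},\overline{Y}_{N}\right)$ that you actually establish.
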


\begin{proof}
Note that $\mathbf{k}(\omega)=k_{x}\big(Y(\omega)\big)$ is almost
surely uniquely defined as the norm is strictly convex and $k=\E\mathbf{k}$.
We define the random quantities $\mathbf{k}_{i}:=k_{x}(Y_{i})$ and
\begin{equation}
V_{i}:=\mathbf{k}_{i}+x-k\label{eq:19-1}
\end{equation}
($\overline{\mathbf{k}}_{N}:=\frac{1}{N}\sum_{i=1}^{n}\mathbf{k}_{i}$
and $\overline{V}_{N}:=\frac{1}{N}\sum_{i=1}^{N}V_{i}$, resp.), such
that 
\begin{equation}
\frac{1}{\sqrt{N}}\sum_{i=1}^{N}\HB_{k-x}\left(x-V_{i}\right)\xrightarrow{\mathcal{D}}\mathcal{N}\left(0,\,\HB_{k-x}\cdot\Sigma\cdot\HB_{k-x}^{\top}\right),\quad n\to\infty,\label{eq:20}
\end{equation}
where $\Sigma=\E(\mathbf{k}-k)(\mathbf{k}-k)^{\top}$. Note next that
$\E\frac{1}{N}\sum_{i=1}^{N}V_{i}=x$. By Theorem~\ref{thm:name}
there is a neighborhood $V(x)$ such that 
\begin{equation}
\mathbb{H}\left(\left\{ v\right\} ,\,\E Y\right)-\mathbb{H}\left(\left\{ x\right\} ,\,\E Y\right)=\HB_{k-x}(x-v)\qquad\text{ for all }v\in V(x).\label{eq:17}
\end{equation}
Note further that 
\begin{align*}
\mathbb{H} & \left(\left\{ x\right\} ,\,\E Y\right)-\mathbb{H}\left(\left\{ x\right\} ,\,\overline{Y}_{N}\right)\\
 & =\left(\mathbb{H}\left(\left\{ x\right\} ,\,\E Y\right)-\mathbb{H}\left(\left\{ \overline{V}_{N}\right\} ,\E Y\right)\right)\\
 & \quad+\mathbb{H}\left(\left\{ \overline{V}_{N}\right\} ,\,\E Y\right)-\mathbb{H}\left(\left\{ \overline{V}_{N}\right\} ,\overline{Y}_{N}\right)+\left(\mathbb{H}\left(\left\{ \overline{V}_{N}\right\} ,\overline{Y}_{N}\right)-\mathbb{H}\left(\left\{ x\right\} ,\overline{Y}_{N}\right)\right)\\
 & =\HB_{k-x}(\overline{V}_{N}-x)+\HB_{k-x}(k-\overline{V}_{N})-\left\Vert x-k\right\Vert +\HB_{\overline{\mathbf{k}}_{N}-x}(x-\overline{V}_{N})\\
 & =\HB_{\overline{\mathbf{k}}_{N}-x}(x-\overline{V}_{N})
\end{align*}
where we have used~(\ref{eq:17}),~(\ref{eq:19}),~(\ref{eq:19-1})
and again~(\ref{eq:17}), provided that $\overline{V}_{N}\in V(x)$.
The assertion of the theorem follows from~(\ref{eq:20}) as $\HB_{\overline{\mathbf{k}}_{N}-x}\to\HB_{k-x}$
for the strictly convex norm, provided that can ensure that $\overline{V}_{N}\in V(x)$
almost surely.

As $x$ is in the interior of $V(x)$ we apply the large deviation
theory (cf.\ for example \citet{Dembo1998} or \citet[Theorem~4.1]{Norkin2012})
to obtain that 
\[
\limsup_{N\to\infty}\frac{1}{N}\ln P\left(\frac{1}{N}\sum_{i=1}^{N}V_{i}\notin V(x)\right)<0.
\]
That is, there is $q>0$ such that $P\left(\frac{1}{N}\sum_{i=1}^{N}V_{i}\notin V(x)\right)<e^{-qN}$
and thus 
\[
P\left(\frac{1}{N}\sum_{i=1}^{N}V_{i}\in V(x)\right)>1-e^{-qN}\xrightarrow{N\rightarrow\infty}1.
\]
The desired distribution~(\ref{eq:HBDist-1}) follows hence from~(\ref{eq:20}).
\end{proof}

\section{\label{sec:Summary}Summary}

We discuss convergence properties of random sets. We are particularly
interested in fluctuations of the sample means close to the boundary
of the limit set, the expected value. It turns out that special properties
of points on the boundary of the expected value set can already be
seen at the boundary of the sample means, while other properties are
inherited from the sample means to the expected value set. 

The paper addresses important boundary points of the expected value
set separately. Exposed points of the expected value set have a unique
measurable selection, and so have the sample means. Convergence thus
can be described by a usual process of points in $\mathbb{R}^{d}$.
Tangent planes display a similar behavior, we describe their convergence
by identifying the moments to describe their convergence by use of
the central limit theorem.

We finally address facets which are inherited by the expected value
set, but (perhaps surprisingly) not the other way round. 

\section{Acknowledgment}

\begin{wrapfigure}{o}{0.1\textwidth}%
\centering{}\includegraphics[clip,width=0.1\textwidth]{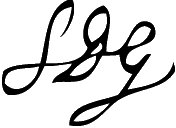}\end{wrapfigure}%
Special thanks to Prof.\ Roger J.-B.\ Wets and Prof.\ Georg Ch.\ Pflug,
who encouraged and supported investigating set-valued mappings. Both
provided useful comments on initial versions of this paper. 

We would like to thank the editor of the journal and two independent
referees for their commitment to assess the paper. Their comments
were very professional and profound and lead to a significant improvement
of the content. 

\bibliographystyle{abbrvnat}
\bibliography{LiteraturAlois}

\end{document}